\pgfplotsset{compat=newest}
\newcommand\coarse{\bullet}
\newcommand\fine{\circ}
\renewcommand{\d}{\,{\rm d}} 
\DeclareMathOperator{\diam}{diam}
\DeclareMathOperator\osc{osc}
\newcommand{\const}[1]{C_{\text{\rm#1}}}
\newcommand{\set}[2]{\big\{#1\,:\,#2\big\}}
\newcommand{\norm}[3][]{#1\|#2#1\|_{#3}}
\newcommand\refine{{\tt refine}}
\newcommand\N{\mathbb{N}}
\newcommand\R{\mathbb{R}}
\newcommand\T{\mathbb{T}}
\newcommand\MM{\mathcal M}
\newcommand\OO{{\mathcal O}}
\newcommand\TT{\mathcal T}
\numberwithin{equation}{section}
\numberwithin{figure}{section}
\newtheorem{theorem}{Theorem}[section]
\newtheorem{proposition}[theorem]{Proposition}
\newtheorem{algorithm}[theorem]{Algorithm}
\newtheorem{remark}[theorem]{Remark}
\newcommand*\patchAmsMathEnvironmentForLineno[1]{%
  \expandafter\let\csname old#1\expandafter\endcsname\csname #1\endcsname
  \expandafter\let\csname oldend#1\expandafter\endcsname\csname end#1\endcsname
  \renewenvironment{#1}%
     {\linenomath\csname old#1\endcsname}%
     {\csname oldend#1\endcsname\endlinenomath}}%
\newcommand*\patchBothAmsMathEnvironmentsForLineno[1]{%
  \patchAmsMathEnvironmentForLineno{#1}%
  \patchAmsMathEnvironmentForLineno{#1*}}%
\def\@seccntformat#1{%
  \protect\textup{\protect\@secnumfont
    \ifnum\pdfstrcmp{subsection}{#1}=0 \bfseries\fi
    \csname the#1\endcsname
    \protect\@secnumpunct
  }%
}  
\newcommand{\Hloc}{H_{\mathrm{loc}}}
\newcommand{\inner}[3][]{\langle #2,#3 \rangle_{#1}}
\newcommand{\n}{\boldsymbol{n}}
\newcommand{\x}{\boldsymbol{x}}
\newcommand{\y}{\boldsymbol{y}}
\newcommand{\z}{\boldsymbol{z}}
\renewcommand{\u}{\boldsymbol{u}}
\renewcommand{\v}{\boldsymbol{v}}
\newcommand{\dy}{\mathrm{d}\y}
\newcommand{\Omegaext}{\Omega^{\mathrm{ext}}}
\newcommand{\PP}{\mathcal{P}}
\newcommand{\RR}{\mathcal{R}}
\renewcommand{\SS}{\mathcal{S}}
\newcommand{\uext}{u^{\mathrm{ext}}}
\newcommand{\abs}[1]{\lvert #1 \rvert}
\begin{document}
\title{Optimal convergence rates\\ of an adaptive hybrid FEM-BEM method\\ for full-space linear transmission problems}
\author{Gregor~Gantner}
\address{Institute for Numerical Simulation, University of Bonn \\
Friedrich-Hirzebruch-Allee~7, 53115 Bonn, Germany
}
\email{gantner@ins-uni.bonn.de}

\author{Michele~Ruggeri}
\address{Department of Mathematics,
University of Bologna,
Piazza di Porta San Donato 5, 40126 Bologna, Italy}
\email{m.ruggeri@unibo.it}
\dedicatory{Dedicated to Dirk Praetorius on the occasion of his 50th birthday.}
\thanks{\emph{Acknowledgements.}
GG and MR acknowledge that this work was initiated during their employment at Inria Paris and University of Strathclyde, respectively.
These institutions are thankfully acknowledged.
In addition, GG acknowledges funding by the Deutsche Forschungsgemeinschaft (DFG, German Research Foundation) under Germany's Excellence Strategy – EXC-2047/1 – 390685813.
MR is a member of the `Gruppo Nazionale per il Calcolo Scientifico (GNCS)'
of the Italian `Istituto Nazionale di Alta Matematica (INdAM)'
and was partially supported by GNCS (research project GNCS 2024 on \emph{Advanced numerical methods for nonlinear problems in materials science} -- CUP E53C23001670001)
and by the European Union -- NextGenerationEU under the National Recovery and Resilience Plan (PNRR) -- Mission 4 Education and research -- Component 2 From research to business -- Investment 1.1 Notice Prin 2022 -- DD N. 104 of 2/2/2022, entitled \emph{Low-rank Structures and Numerical Methods in Matrix and Tensor Computations and their Application}, code 20227PCCKZ -- CUP J53D23003620006.}
\begin{abstract}
We consider a hybrid FEM-BEM method 
to compute approximations of full-space linear elliptic transmission problems.
First, we derive \textsl{a~priori} and \textsl{a~posteriori} error estimates. 
Then, building on the latter, we present an adaptive algorithm
and prove that it converges at optimal rates with respect to the number of mesh elements. 
Finally, we provide numerical experiments, demonstrating the practical performance of the adaptive algorithm.
\end{abstract}
\maketitle

\section{Introduction}
Let $\Omega \subset \R^d$ ($d=2,3$) be a bounded Lipschitz domain with polytopal boundary $\Gamma := \partial\Omega$
and outward-pointing unit normal vector $\n: \Gamma \to \R^d$.
Given $f: \Omega \to \R$ and $g, \phi : \Gamma \to \R$,
we are interested in computing a numerical approximation of the solution pair $u: \Omega \to \R$, $\uext: \Omegaext:=\R^d\setminus\overline\Omega \to \R$ 
of the full-space linear elliptic transmission problem
\begin{subequations} \label{eq:transmission}
\begin{alignat}{2}
\label{eq:poisson_interior}
- \Delta u &= f &\quad& \text{in } \Omega,\\
- \Delta \uext &= 0 && \text{in } \Omegaext,\\
\label{eq:jump_cond1}
u - \uext &= g && \text{on } \Gamma,\\
\label{eq:jump_cond2}
\partial_{\n}(u - \uext) &= \phi && \text{on } \Gamma,\\
\uext(\x) &= c\log\abs{\x} + \OO(\abs{\x}^{-1}) && \text{as } \abs{\x} \to \infty,
\end{alignat}
\end{subequations}
for some arbitrary $c\in\R$ if $d=2$ and $c=0$ if $d=3$.

To cope with the unboundedness of the exterior domain $\Omegaext$,
well-known approaches usually referred to as \emph{FEM-BEM coupling methods}
resort to combinations of finite element approximations of the problem in the interior domain
with certain reformulations of the problem in the exterior domain in terms of boundary integral operators;
see, e.g., the seminal papers~\cite{jn1980,bm1983,costabel1988}.
Such methods usually require the solution of linear systems in which the system matrix 
involves blocks of sparse matrices (arising from finite element approximations in the interior domain) 
and fully-populated matrices (resulting from the discretization of nonlocal boundary integral operators).

In this work, we consider a hybrid FEM-BEM method for full-space linear elliptic transmission problems, 
which was originally proposed for micromagnetic applications~\cite{fk1990}. 
More precisely, the approach was designed to approximate solutions of the special case of~\eqref{eq:transmission}
in which $f = - \nabla\cdot\boldsymbol{m}$, $g=0$, and $\phi = \boldsymbol{m} \cdot \n$
for a given vector-valued function $\boldsymbol{m}$ (representing the magnetization of a ferromagnetic material).
In this case, the solution $u$ of~\eqref{eq:transmission} models the magnetostatic potential 
and the resulting magnetostatic field $\boldsymbol{h} = - \nabla u$ is an interaction that needs to be computed
to simulate the magnetization dynamics; see, e.g., \cite{bffgpprs2014,arbvhps15,merrill,dh2023}.

One of the reasons for the popularity of this approach in computational physics compared to standard FEM-BEM coupling methods is its simplicity: 
To compute an approximation of the physically relevant solution in the interior domain,
only the solution of \emph{two} (sparse) linear FEM systems 
and the interpolation of \emph{one} boundary integral operator are required.
However, by construction, it is applicable only to linear transmission problems for which~\eqref{eq:poisson_interior} is the Poisson problem,
whereas standard FEM-BEM methods allow for more general elliptic problems in the interior.

In this work, we perform the rigorous numerical analysis of the hybrid FEM-BEM method for arbitrary right-hand sides in \eqref{eq:poisson_interior} and arbitrary jump conditions in~\eqref{eq:jump_cond1}--\eqref{eq:jump_cond2}).
First, we prove \textsl{a~priori} error estimates by verifying a simple C\'ea-type lemma.
Next, building on~\cite{afkpp13}, which thoroughly analyzes adaptive FEM for second-order elliptic PDEs with inhomogeneous Dirichlet boundary conditions, we derive a reliable and efficient \textsl{a~posteriori} error estimator.
Then, we use this to steer an adaptive algorithm of the standard form
\begin{equation*}
\mathrm{SOLVE}
\quad\rightarrow\quad
\mathrm{ESTIMATE}
\quad\rightarrow\quad
\mathrm{MARK}
\quad\rightarrow\quad
\mathrm{REFINE}.
\end{equation*}
Exploiting again ideas from~\cite{afkpp13} and thus of the seminal works~\cite{stevenson07,ckns08} on rate optimality of adaptive FEM
(see also the review article~\cite{cfpp14}),
we show that the adaptive algorithm converges at optimal rates with respect to the number of elements. 
In our convergence analysis, a crucial role is played by the local inverse estimates
for nonlocal boundary integral operators shown in~\cite{fkmp13}
(see also their generalization to curved boundaries established in~\cite{affkmp17}),
used in these papers to show rate optimality of adaptive BEM (see also the review article~\cite{ffhkp15}). 

We mention that for usual FEM-BEM coupling methods,
involving both FEM and BEM matrices in the overall linear system,
adaptivity has already been investigated in the pioneering work~\cite{cs1995}. 
In~\cite{affkmp13}, reliable error estimators for the three classical FEM-BEM coupling approaches~\cite{jn1980,bm1983,costabel1988} have been proposed and plain convergence of corresponding adaptive algorithms has been proved.
Optimal convergence rates have only recently been shown in~\cite{feischl17} for piecewise polynomial ansatz functions enriched with certain bubble functions and in~\cite{feischl22} for standard ansatz functions. 
The key challenge was the lack of some Pythagoras identity owing to the nonsymmetry of the couplings. 
Instead, for our adaptive hybrid FEM-BEM method,
we prove a quasi-orthogonality property similar to the one in~\cite{afkpp13}.

\subsection{Outline}

The remainder of this work is organized as follows:
In Section~\ref{sec:notation}, we collect some general notation used throughout the paper.
In Section~\ref{sec:preliminaries}, we recall the definition of Sobolev spaces in the interior, the exterior, and on the boundary along with standard discrete approximation subspaces. 
We further recall classical boundary integral operators and provide the weak formulation of the considered problem~\eqref{eq:transmission}.
In Section~\ref{sec:hybrid}, we formulate our hybrid FEM-BEM method and derive corresponding \textsl{a~priori} (Proposition~\ref{prop:apriori}) and \textsl{a~posteriori} (Proposition~\ref{prop:reliability}) error estimates.
Building on the latter, we present in Section~\ref{sec:adaptivity} an adaptive algorithm (Algorithm~\ref{alg:adaptive_algorithm}) and state our main result on optimal convergence (Theorem~\ref{thm:estimator_convergence}). 
The proof, which relies on the abstract framework of~\cite{cfpp14} and consists of the verification of the
\emph{axioms of adaptivity} for the error estimator, is presented in Section~\ref{sec:axioms}.
We conclude the paper in Section~\ref{sec:numerics} with numerical experiments,
demonstrating the practical performance of the method and of the adaptive algorithm.

\subsection{General notation} \label{sec:notation}
Throughout and without any ambiguity, $|\cdot|$ denotes the absolute value of scalars, the Euclidean norm of vectors in $\R^m$, or the measure of a set in $\R^m$, e.g., the length of an interval or the area of a surface in $\R^3$.
We write $A\lesssim B$ to abbreviate $A\le CB$ with some generic constant $C>0$ which is clear from the context.
Moreover, $A\eqsim B$ abbreviates $A\lesssim B\lesssim A$.

\section{Preliminaries}\label{sec:preliminaries}

\subsection{Sobolev spaces}
For measurable $\omega \subseteq \Omega$ or $\omega \subseteq \Gamma$, we abbreviate the corresponding $L^2$-norm by 
$\norm{\cdot}{\omega} := \norm{\cdot}{L^2(\omega)}$. 
Moreover, we abbreviate $H^1_*(\Omega) := \big\{ v \in H^1(\Omega) : \inner[\Omega]{v}{1} = 0 \big\}$. 
As usual, let $\widetilde H^{-1}(\Omega) := H^{1}(\Omega)^*$ and $H^{-1/2}(\Gamma) := H^{1/2}(\Gamma)^*$ be the dual spaces of $H^{1}(\Omega)$ and $H^{1/2}(\Gamma)$, respectively. 
We denote by $\inner[\Omega]{\cdot}{\cdot}$ and $\inner[\Gamma]{\cdot}{\cdot}$ the corresponding duality products,
and note that they coincide with $L^2$-scalar products if the arguments are in $L^2$.
For open sets $\omega\subseteq\R^d\setminus\Gamma$, we consider the Sobolev space $\Hloc^1(\omega):=\set{v:\omega\to \R}{v|_{\omega'} \in H^1(\omega') \text{ for all bounded open sets }\omega'\subseteq\omega}$.

We denote by $(\cdot)|_\Gamma: H^1(\Omega) \to H^{1/2}(\Gamma)$ the interior trace operator, 
which coincides with the usual restriction $v \vert_{\Gamma}$ for all continuous functions $v \in C(\overline\Omega)$. 
Similarly, we denote by $(\cdot)|_\Gamma: \Hloc^1(\Omegaext) \to H^{1/2}(\Gamma)$ also the exterior trace operator. 
If the context permits, we will omit the explicit notation $(\cdot)|_\Gamma$, e.g., we write $\norm{v}{H^{1/2}(\Gamma)}$ instead of $\norm{v|_\Gamma}{H^{1/2}(\Gamma)}$ for functions $v\in H^1(\Omega)$ or $v\in \Hloc^1(\Omegaext)$.
We denote by $\partial_{\n}: \{ v \in H^1(\Omega) : \Delta v \in \widetilde H^{-1}(\Omega) \} \to H^{-1/2}(\Gamma)$
the interior normal derivative, which coincides with the classical derivative $\partial_{\n} v$ for all smooth functions $v \in C^1(\overline\Omega)$. 
Similarly, we denote by $\partial_{\n}: \{ v \in \Hloc^1(\Omegaext) : \Delta v = 0 \} \to H^{-1/2}(\Gamma)$ also the exterior normal derivative.

\subsection{Integral operators} \label{eq:integral_operators}
We denote by $G \in C^{\infty} (\R^d \setminus \{ 0\})$ the Newtonian kernel defined, for $\z \in \R^d \setminus \{ 0\}$, by
\begin{equation*}
G(\z) =
\begin{cases}
- \frac{1}{2 \pi} \log\abs{\z} & \text{if } d=2,\\
\frac{1}{4 \pi} \frac{1}{\abs{\z}} & \text{if } d=3.\\
\end{cases}
\end{equation*}
For all sufficiently smooth $w : \Gamma \to \R$, we define the double-layer potential of $w$ as
\begin{equation} \label{eq:double_layer}
\widetilde K w(\x)
= \int_{\Gamma} \partial_{\n(\y)} G(\x - \y) \, w(\y) \, \dy
\quad \text{for } \x \in \R^d \setminus \Gamma.
\end{equation}
It is well known that this potential can be extended to a bounded and linear operator 
$\widetilde K: H^{1/2}(\Gamma) \to \Hloc^1(\R^d \setminus \Gamma)$,
and satisfies the Laplace equation
\begin{equation*}
-\Delta \widetilde K = 0 \quad \text{ in }\R^d\setminus\Gamma,
\end{equation*}
the jump conditions
\begin{equation*}
\big((\widetilde K w)|_\Omega - (\widetilde K w)|_{\Omegaext}\big)|_\Gamma = -w
\,\,\, \text{and} \,\,\,
\partial_{\n} \big((\widetilde K w)|_\Omega - (\widetilde K w)|_{\Omegaext}\big) = 0
\quad \text{for all } w \in H^{1/2}(\Gamma),
\end{equation*}
and the radiation condition
\begin{equation*}
\widetilde K w(\x) = c\log\abs{\x} + \OO(\abs{\x}^{-1}) \quad \text{as } \abs{\x} \to \infty
\end{equation*}
for some arbitrary $c\in\R$ if $d=2$ and $c=0$ if $d=3$.
We denote by $K: H^{1/2}(\Gamma) \to H^{1/2}(\Gamma)$ with $Kw := \big((\widetilde Kw)|_\Omega\big)|_\Gamma + w/2$ the double-layer operator.
For all sufficiently smooth $w : \Gamma \to \R$, $Kw$ satisfies the integral representation~\eqref{eq:double_layer} for almost all $\x\in\Gamma$. 
Finally, we mention that the restriction $K:H^1(\Gamma)\to H^1(\Gamma)$ onto $H^1(\Gamma)$ is a bounded and linear operator to $H^1(\Gamma)$. 
For details and proofs, we refer to the monographs~\cite{mclean00,steinbach08,ss11}. 

\subsection{Weak formulation}
We recall the weak formulation of~\eqref{eq:transmission}.
For the problem data, we assume that
$f \in \widetilde H^{-1}(\Omega)$,
$g \in H^{1/2}(\Gamma)$,
and $\phi \in H^{-1/2}(\Gamma)$.
The weak formulation then reads as follows:
Find $(u,\uext) \in H^1(\Omega) \times \Hloc^1(\Omegaext)$
with $(u - \uext)|_\Gamma = g$
such that
\begin{equation*}
\inner[\Omega]{\nabla u}{\nabla v}
+ \inner[\Omegaext]{\nabla \uext}{\nabla v}
=
\inner[\Omega]{f}{v}
+ \inner[\Gamma]{\phi}{v}
\quad
\text{for all }
v \in C^{\infty}_c(\R^d).
\end{equation*}
It is well known that there exists indeed a unique weak solution $(u,u^{\rm ext})$ to this problem; see again the monographs~\cite{mclean00,steinbach08,ss11}. 

\subsection{Discrete spaces}
We consider triangulations $\TT_\coarse$ of $\Omega$, i.e., sets of open $d$-dimensional simplices (triangles if $d=2$ and tetrahedra if $d=3$) forming a partition of $\Omega$ in the sense that $\overline\Omega = \bigcup_{T\in\TT_\coarse} \overline T$ and $T\cap T' = \emptyset$ for all $T\neq T'\in\TT_\coarse$.
Throughout, triangulations $\TT_\coarse$ are assumed to be conforming in the sense that $\overline T\cap \overline T'$ is either empty, a common vertex, a common edge, or a common face (if $d=3$) for all $T\neq T'\in\TT_\coarse$.
In particular, this induces a conforming triangulation $\TT_\coarse|_\Gamma$ of $\Gamma$ into (with respect to $\Gamma$) open $(d-1)$-dimensional simplices (line segments if $d=2$ and triangles if $d=3$).
For $p \in \N$ and $T \in \TT_\coarse$, we denote by $\PP^p(T)$ the space of polynomials of degree at most $p$ on $T$ and by $\PP^p(\TT_\coarse):=\set{v_h \in L^2(\Omega)}{v_h|_T \in \PP^p(T) \text{ for all }T\in\TT_\coarse}$ the space of $\TT_\coarse$-piecewise polynomials of degree at most $p$. 
We consider the space of globally continuous $\TT_\coarse$-piecewise polynomials
\begin{equation*}
	\SS^p(\TT_\coarse)
	:=C^0(\overline{\Omega}) \cap \PP^p(\TT_\coarse)
	\subset H^1(\Omega).
\end{equation*}
Let $J_\coarse^\Omega: H^1(\Omega) \to \SS^p(\TT_\coarse)$ denote the corresponding Scott--Zhang projection from~\cite{sz90} and $\Pi_\coarse^\Omega: L^2(\Omega)\to \PP^{p-1}(\TT_\coarse)$ the $L^2(\Gamma)$-orthogonal projection. 
Additionally, we set 
$\SS^p_*(\TT_\coarse) := \big\{ v_\coarse \in \SS^p(\TT_\coarse) : \inner[\Omega]{v_\coarse}{1} = 0 \big\}$ 
and
$\SS^p_0(\TT_\coarse) := \big\{ v_\coarse \in \SS^p(\TT_\coarse) : v_\coarse \vert_{\Gamma} = 0 \big\}$.
We define $\PP^p(\TT_\coarse|_\Gamma)$ and $\SS^p(\TT_\coarse|_\Gamma)$ with corresponding Scott--Zhang projection $J_\coarse^\Gamma: H^1(\Gamma) \to \SS^p(\TT_\coarse|_\Gamma)$ analogously; see also~\cite{sv06}.
As the definition of $J_\coarse^\Gamma$ only involves integrals on edges for $d=2$ and faces for $d=3$, respectively, it is even well defined and stable on $L^2(\Gamma)$.  
By standard interpolation theory, we also see that $J_\coarse^\Gamma$ is $H^{1/2}$-stable, where the stability constant depends only on the boundary $\Gamma$, the shape-regularity of $\TT_\coarse$, and the polynomial degree $p$. 
We also mention the identity 
\begin{equation}\label{eq:sz_restricted}
	(J_\coarse^\Omega v)|_\Gamma = J_\coarse^\Gamma (v|_\Gamma) \quad\text{for all }v \in H^1(\Omega).
\end{equation}
Moreover, we require the $L^2(\Gamma)$-orthogonal projection $\Pi^\Gamma_\coarse:L^2(\Gamma) \to \PP^{p-1}(\TT_\coarse|_\Gamma)$. 

\section{Hybrid FEM-BEM method}\label{sec:hybrid}
The hybrid FEM-BEM method relies on the superposition principle, i.e., we consider some decomposition $u=u_1 + u_2$ in the interior domain $\Omega$. 
We additionally suppose the compatibility condition
\begin{equation} \label{eq:compatibility}
	\inner[\Omega]{f}{1} + \inner[\Gamma]{\phi}{1} = 0;
\end{equation}
see also Remark~\ref{rem:compatibility}. 
Then, there exists a unique weak solution $u_1 \in H^1_*(\Omega)$ of the Neumann problem
\begin{subequations} \label{eq:fk1}
\begin{alignat}{2}
- \Delta u_1 &= f &\quad& \text{in } \Omega,\\
\partial_{\n} u_1 &= \phi && \text{on } \Gamma.
\end{alignat}
\end{subequations}
Define $u_2 := u - u_1$ in $\Omega$.
Then, by construction, $(u_2,\uext)$ is the unique weak solution of the full-space transmission problem
\begin{alignat*}{2}
- \Delta u_2 &= 0 &\quad& \text{in } \Omega,\\
- \Delta \uext &= 0 &\quad& \text{in } \Omegaext,\\
u_2 - \uext &= g - u_1 && \text{on } \Gamma,\\
\partial_{\n}( u_2 -  \uext) &= 0 && \text{on } \Gamma,\\
\uext(\x) &= c\log\abs{\x} + \OO(\abs{\x}^{-1}) && \text{as } \abs{\x} \to \infty.
\end{alignat*}
Hence, $u_2 = (\widetilde K(u_1 - g))|_\Omega$ and $\uext = (\widetilde K(u_1 - g))|_{\Omegaext}$; see Section~\ref{eq:integral_operators}.
In particular, $u_2$ can be characterized as the solution of the Dirichlet problem
\begin{subequations} \label{eq:fk2}
\begin{alignat}{2}
- \Delta u_2 &= 0 &\quad& \text{in } \Omega,\\
u_2  &= (K-1/2)(u_1 - g) && \text{on } \Gamma.
\end{alignat}
\end{subequations}

Given a conforming triangulation $\TT_\coarse$ and a polynomial degree $p$, this suggests the following discretization: 
\begin{itemize}
\item[(i)]
Find $u_{1,\coarse} \in \SS^p_*(\TT_\coarse)$
such that
\begin{equation}\label{eq:u1_coarse}
\inner[\Omega]{\nabla u_{1,\coarse}}{\nabla v_\coarse}
= \inner[\Omega]{f}{v_\coarse}
+ \inner[\Gamma]{\phi}{v_\coarse}
\quad
\text{for all } v_\coarse \in \SS^p_*(\TT_\coarse).
\end{equation}
\item[(ii)]
Find $u_{2,\coarse} \in \SS^p(\TT_\coarse)$ with $u_{2,\coarse}\vert_{\Gamma} = J_\coarse^\Gamma (K-1/2)(u_{1,\coarse} - g)$
such that
\begin{equation}\label{eq:u2_coarse}
\inner[\Omega]{\nabla u_{2,\coarse}}{\nabla v_\coarse}
= 0
\quad
\text{for all } v_\coarse \in \SS^p_0(\TT_\coarse).
\end{equation}
\item[(iii)] Define $u_\coarse := u_{1,\coarse} + u_{2,\coarse} \in \SS^p(\TT_\coarse)$.
\end{itemize}

\begin{remark}\label{rem:compatibility} 
Integration by parts for $\inner[\Omega]{f}{1} = -\inner[\Omega]{\Delta u}{1}$ shows that
\begin{align*}
	\inner[\Omega]{f}{1} + \inner[\Gamma]{\phi}{1} 
	= \inner[\Omega]{\nabla u}{\nabla 1} + \inner[\Gamma]{\phi-\partial_{\n}u}{1} 
	= -\inner[\Gamma]{\partial_{\n}u^{\rm ext}}{1}. 
\end{align*}
The compatibility condition~\eqref{eq:compatibility} can thus always be guaranteed by replacing $\uext$ by $\uext - \widetilde u^{\rm ext}$, where $\widetilde u^{\rm ext}$ can be chosen as  arbitrary function $\widetilde u^{\rm ext}$ satisfying
\begin{subequations}\label{eq:compatibility_shift}
\begin{align}\label{eq:compatibility_shift_laplace}
	- \Delta \widetilde u^{\rm ext} &= 0 \text{ in } \Omegaext,\\ \label{eq:compatibility_shift_product}
	\inner[\Gamma]{\partial_{\n} \widetilde u^{\rm ext}}{1} &= -\inner[\Gamma]{\partial_{\n}u^{\rm ext}}{1} = \inner[\Omega]{f}{1} + \inner[\Gamma]{\phi}{1},\\ \label{eq:compatibility_shift_radiation}
	\widetilde u^{\rm ext}(\x) &= \widetilde c \log\abs{\x} + \OO(\abs{\x}^{-1})\text{ as } \abs{\x} \to \infty,
\end{align}
\end{subequations}
for some $\widetilde c\in\R$ if $d=2$ and $\widetilde c=0$ if $d=3$. 
Then $(u, u^{\rm ext} - \widetilde u^{\rm ext})$ satisfies the transmission problem~\eqref{eq:transmission} with $g$ replaced by $g - \widetilde u^{\rm ext}|_\Gamma$ and $\phi$ replaced by $\phi - \partial_{\n} \widetilde u^{\rm ext}$. 
By construction, the modified problem satisfies the compatibility condition $\inner[\Omega]{f}{1} + \inner[\Gamma]{\phi - \partial_{\n} \widetilde u^{\rm ext}}{1} = 0$. 

Finally, we note that solutions to~\eqref{eq:compatibility_shift} can be easily constructed by means of the Newtonian kernel.
To this end, abbreviate $C:=\inner[\Omega]{f}{1} + \inner[\Gamma]{\phi}{1}$, fix some $\x \in \Omega$, and set 
\begin{align*}
	\widetilde u^{\rm ext}(\y) := -C \,G(\x-\y) \quad \text{for all }\y \in \Omegaext.
\end{align*}
This function satisfies \eqref{eq:compatibility_shift_laplace} and \eqref{eq:compatibility_shift_radiation}; see, e.g., \cite[Lemma~7.13]{mclean00}. 
To see~\eqref{eq:compatibility_shift_product}, we rewrite
\begin{align*}
	\inner[\Gamma]{\partial_{\n} \widetilde u^{\rm ext}}{1} 
	= -C \int_\Gamma \partial_{\n(\y)} G(\x-\y) \, \d\y
	= -C\,\widetilde K 1(\x).
\end{align*}
By the representation formula applied to the constant function $1$ (see, e.g., \cite[Theorem~6.10]{mclean00}), it holds that $\widetilde K 1(\x)=-1$ so that the last term indeed just equals $C$.
\end{remark}

\begin{remark}
It is well known that the solution $u_1$ of the Neumann problem~\eqref{eq:fk1}
is unique up to an additive constant.
In our method, to fix the ideas, we impose uniqueness of $u_1$ by requiring
that it has zero integral mean.
However, we note that this choice is arbitrary and that the approximation of $u$
is independent of this choice.
To see this,
let $\widetilde u_1 \in H^1(\Omega)$ be the unique solution of the Neumann problem~\eqref{eq:fk1}
obtained by fixing a different additive constant, i.e.,
$\widetilde u_1 = u_1 + \widetilde c$ for some $\widetilde c \in \R$.
Then, define  $\widetilde u_2 := \widetilde K(\widetilde u_1 - g)$ and $\widetilde u := \widetilde u_1 + \widetilde u_2$.
Since $\widetilde K1 = -1$,
we infer that
\begin{align*}
	\widetilde u_2 = \widetilde K(\widetilde u_1 - g) 
	= \widetilde{K}(u_1 + \widetilde c - g) 
	= \widetilde K (u_1  - g) + \widetilde K\widetilde c = u_2 - \widetilde c.
\end{align*}
We thus obtain that
\begin{align*}
	\widetilde u 
	= \widetilde u_1 + \widetilde u_2 
	= u_1 + \widetilde c + u_2 - \widetilde{c}
	= u_1 + u_2 = u.
\end{align*}
\end{remark}

\subsection{\textsl{A~priori} error estimation}
The next proposition shows that the pair $\u_\coarse:=(u_{1,\coarse},u_{2,\coarse})$ is indeed a quasi-best approximation of $\u:=(u_1,u_2)$. 

\begin{proposition}\label{prop:apriori}
There exists $\const{c\'ea}>0$ such that 
\begin{equation}\label{eq:cea}
	\norm{\u - \u_\coarse}{H^1(\Omega)} \le \const{c\'ea} \min_{\v_\coarse \in \SS^p(\TT_\coarse)^2} \norm{\u - \v_\coarse}{H^1(\Omega)}.
\end{equation}
The constant $\const{c\'ea}$ depends only on the domain $\Omega$, the shape-regularity of $\TT_\coarse$, and the polynomial degree $p$. 
\end{proposition}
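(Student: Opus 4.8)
The plan is to split the error using the triangle inequality $\norm{\u-\u_\coarse}{H^1(\Omega)} \le \norm{u_1-u_{1,\coarse}}{H^1(\Omega)} + \norm{u_2-u_{2,\coarse}}{H^1(\Omega)}$ and to bound each summand separately by the best approximation error, since the two discrete problems~\eqref{eq:u1_coarse} and~\eqref{eq:u2_coarse} are essentially decoupled (the second depends on the first only through its boundary data). For the first term, $u_{1,\coarse}$ is the Galerkin approximation of $u_1$ in $\SS^p_*(\TT_\coarse)$ for a coercive symmetric bilinear form on $H^1_*(\Omega)$; hence the classical C\'ea lemma gives $\norm{u_1-u_{1,\coarse}}{H^1(\Omega)} \lesssim \min_{w_\coarse \in \SS^p_*(\TT_\coarse)} \norm{u_1-w_\coarse}{H^1(\Omega)}$, and since $u_1\in H^1_*(\Omega)$ and one may subtract integral means, this is further bounded by $\min_{w_\coarse \in \SS^p(\TT_\coarse)} \norm{u_1-w_\coarse}{H^1(\Omega)}$ up to a constant depending only on $\Omega$ via a Poincaré-type inequality.

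For the second term, I would treat~\eqref{eq:u2_coarse} as a FEM problem with inhomogeneous Dirichlet data, following the standard reduction: write $u_{2,\coarse} = \widetilde u_{2,\coarse} + L_\coarse$ where $L_\coarse \in \SS^p(\TT_\coarse)$ is any discrete lift of the discrete boundary data $J_\coarse^\Gamma (K-1/2)(u_{1,\coarse}-g)$ and $\widetilde u_{2,\coarse}\in\SS^p_0(\TT_\coarse)$. A C\'ea-type argument for inhomogeneous Dirichlet problems (see, e.g., the analysis in~\cite{afkpp13}) then yields
\begin{equation*}
\norm{u_2-u_{2,\coarse}}{H^1(\Omega)} \lesssim \min_{v_\coarse\in\SS^p(\TT_\coarse)} \norm{u_2-v_\coarse}{H^1(\Omega)} + \norm{(u_2-u_{2,\coarse})|_\Gamma}{H^{1/2}(\Gamma)},
\end{equation*}
so it remains to control the boundary-data consistency error $\norm{u_2|_\Gamma - J_\coarse^\Gamma(K-1/2)(u_{1,\coarse}-g)}{H^{1/2}(\Gamma)}$. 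Using $u_2|_\Gamma = (K-1/2)(u_1-g)$, I would split this into $\norm{(K-1/2)(u_1-u_{1,\coarse})}{H^{1/2}(\Gamma)}$, handled by $H^{1/2}$-boundedness of $K$ and the trace theorem applied to the already-bounded first term, plus the interpolation error $\norm{(1-J_\coarse^\Gamma)(K-1/2)(u_{1,\coarse}-g)}{H^{1/2}(\Gamma)}$.

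The main obstacle is this last interpolation term: it is an approximation error for a function, $(K-1/2)(u_{1,\coarse}-g)$, that is \emph{not} the exact boundary datum but the discrete one, and it must be bounded by the \emph{best approximation error of the exact solution} $\u$, not by data-dependent quantities. The remedy is to compare with the exact datum: write $(1-J_\coarse^\Gamma)(K-1/2)(u_{1,\coarse}-g) = (1-J_\coarse^\Gamma)(K-1/2)(u_1-g) + (1-J_\coarse^\Gamma)(K-1/2)(u_{1,\coarse}-u_1)$. The second piece is bounded by $H^{1/2}$-stability of $J_\coarse^\Gamma$ (recalled in the excerpt), $H^{1/2}$-boundedness of $K$, and the trace theorem, reducing it again to $\norm{u_1-u_{1,\coarse}}{H^1(\Omega)}$. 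The first piece equals $(1-J_\coarse^\Gamma)(u_2|_\Gamma)$; using the identity~\eqref{eq:sz_restricted}, for any $v_\coarse\in\SS^p(\TT_\coarse)$ one has $(1-J_\coarse^\Gamma)(u_2|_\Gamma) = (1-J_\coarse^\Gamma)\big((u_2-v_\coarse)|_\Gamma\big)$, so $H^{1/2}$-stability of $J_\coarse^\Gamma$ together with the trace theorem bounds it by $\lesssim \norm{u_2-v_\coarse}{H^1(\Omega)}$, and minimizing over $v_\coarse$ closes the estimate. Collecting all bounds and using that each term has been reduced either to $\norm{u_1-u_{1,\coarse}}{H^1(\Omega)}$ (already controlled by the best approximation of $u_1$, hence of $\u$) or directly to $\min_{\v_\coarse}\norm{\u-\v_\coarse}{H^1(\Omega)}$ gives~\eqref{eq:cea}, with all constants depending only on $\Omega$, the shape-regularity of $\TT_\coarse$, and $p$, as claimed.
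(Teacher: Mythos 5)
Your argument is correct and uses the same ingredients as the paper's proof (standard C\'ea lemma for $u_1$; the inhomogeneous-Dirichlet quasi-best-approximation result of \cite{afkpp13} for $u_2$; stability of $K$, of $J_\coarse^\Gamma$, and of the trace operator to absorb the perturbation of the Dirichlet data caused by replacing $u_1$ with $u_{1,\coarse}$), but the bookkeeping for $u_2$ is organized differently. The paper inserts the intermediate \emph{continuous} solution $u_2^\coarse := L(K-1/2)(u_{1,\coarse}-g)$ with the perturbed boundary data, so that $u_{2,\coarse}$ is \emph{exactly} the discretization of $u_2^\coarse$ covered by \cite[Proposition~2.4]{afkpp13}; the data perturbation is then absorbed in a single step via stability of the harmonic lifting $L$. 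You instead compare $u_{2,\coarse}$ directly with $u_2$, which forces you to carry the boundary consistency term $\norm{(u_2-u_{2,\coarse})|_\Gamma}{H^{1/2}(\Gamma)}$ explicitly and to decompose it on $\Gamma$; this requires, in addition, a uniformly $H^{1/2}(\Gamma)\to H^1(\Omega)$-bounded \emph{discrete} lifting of discrete boundary data (which the paper only constructs later, in the discrete-reliability section), so your route is slightly heavier but makes the consistency error more transparent. One small slip: for the identity $(1-J_\coarse^\Gamma)(u_2|_\Gamma)=(1-J_\coarse^\Gamma)\big((u_2-v_\coarse)|_\Gamma\big)$ you should invoke the projection property $J_\coarse^\Gamma w_\coarse=w_\coarse$ for $w_\coarse\in\SS^p(\TT_\coarse|_\Gamma)$ (applied to $w_\coarse=v_\coarse|_\Gamma$), not the commutation identity~\eqref{eq:sz_restricted}, which relates $J_\coarse^\Omega$ and $J_\coarse^\Gamma$ and is not what is needed here.
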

\begin{proof}
The Poincar\'e inequality and the fact that $u_{1,\coarse}$ is the orthogonal projection of $u_1$ onto the space $\SS^p_*(\TT_\coarse)$ with respect to the norm $\norm{\nabla(\cdot)}{\Omega}$ show that 
\begin{equation}\label{eq:cea1}
	\norm{u_1 - u_{1,\coarse}}{H^1(\Omega)} 
	\lesssim \norm{\nabla(u_1 - u_{1,\coarse})}{\Omega} 
	= \min_{v_\coarse \in \SS^p(\TT_\coarse)} \norm{\nabla(u_1 - v_\coarse)}{\Omega}.
\end{equation}

For the second term, we require the harmonic lifting operator $L:H^{1/2}(\Gamma)\to H^1(\Omega)$, mapping $w\in H^{1/2}(\Gamma)$ to the unique solution $Lw \in H^1(\Omega)$ of the Dirichlet problem
\begin{subequations} \label{eq:lifting}
\begin{alignat}{2}
- \Delta Lw &= 0 &\quad& \text{in } \Omega,\\
Lw  &= w && \text{on } \Gamma.
\end{alignat}
\end{subequations}
In particular, we have that $u_2 = L(K-1/2)(u_1 - g)$. 
We introduce $u_2^\coarse := L(K-1/2) (u_{1,\coarse} - g)$ and apply the triangle inequality to see that
\begin{equation*}
	\norm{u_2 - u_{2,\coarse}}{H^1(\Omega)} 
	 \le \norm{u_2 - u_2^\coarse}{H^1(\Omega)} + \norm{u_2^\coarse - u_{2,\coarse}}{H^1(\Omega)}.
\end{equation*}
Stability of $L$, $K$, and the trace operator show that 
\begin{equation}\label{eq:aux_efficiency}
	\norm{u_2 - u_2^\coarse}{H^1(\Omega)}
	\lesssim \norm{(K-1/2) (u_1 - u_{1,\coarse}) }{H^{1/2}(\Gamma)} 
	\lesssim \norm{ u_1 - u_{1,\coarse} }{H^{1}(\Omega)}.
\end{equation}
Noting that $u_{2,\coarse}$ is the Galerkin approximation to $u_2^\coarse$ with $u_{2,\coarse}|_\Gamma = J_\coarse^\Gamma u_2^\coarse$,
the quasi-best-approximation result \cite[Proposition~2.4]{afkpp13} and the triangle inequality show that
\begin{align*}
	\norm{u_2^\coarse - u_{2,\coarse}}{H^1(\Omega)} 
	\lesssim \min_{v_\coarse \in \SS^p(\TT_\coarse)} \norm{u_2^\coarse - v_\coarse}{H^1(\Omega)}
	\le \norm{u_2 - u_2^\coarse}{H^1(\Omega)} +  \min_{v_\coarse \in \SS^p(\TT_\coarse)} \norm{u_2 - v_\coarse}{H^1(\Omega)}.
\end{align*}
Overall, \eqref{eq:cea1} and \eqref{eq:aux_efficiency} conclude the proof.

The remaining estimate 
\begin{equation*}
	\norm{u_2 - u_{2,\coarse}}{H^1(\Omega)} \lesssim \min_{v_\coarse \in \SS^p(\TT_\coarse)} \norm{\nabla(u_2 - v_\coarse)}{\Omega}
\end{equation*}
follows from \cite[Proposition~2.3]{afkpp13}. 
\end{proof}

From~\eqref{eq:cea} and standard approximation theory, it follows for the solution $u$ of~\eqref{eq:transmission} in the interior domain that 
\begin{align}\label{eq:a_priori}
	\norm{u - u_\coarse}{H^1(\Omega)} \lesssim \max_{T\in\TT_\coarse} \, \diam(T)^p
\end{align}
if the decomposition $u = u_1 + u_2$ is such that $u_1,u_2 \in H^{p+1}(\Omega)$.

\begin{remark}\label{rem:cea}
Note that~\eqref{eq:cea} only guarantees that $(u_{1,\coarse},u_{2,\coarse})$ is a quasi-best approximation of $(u_1,u_2)$, and not necessarily that the sum $u_\coarse=u_{1,\coarse}+u_{2,\coarse}$ is a quasi-best approximation of $u = u_1+u_2$.
In particular, the \textsl{a~priori} estimate~\eqref{eq:a_priori} requires regularity of both $u_1$ and $u_2$.
In contrast to this, the standard FEM-BEM couplings~\cite{jn1980,costabel1988} provide quasi-best approximations of the pair $(u,\partial_{\n} \uext)$.
\end{remark}

\subsection{\textsl{A~posteriori} error estimation}

Assuming the additional regularities $f\in L^2(\Omega)$, $\phi \in L^2(\Omega)$, and $g \in H^1(\Gamma)$, we define for all elements $T\in\TT_\coarse$ with mesh-size $h_T := |T|^{1/d}$ the error indicators
\begin{subequations}\label{eq:estimator}
\begin{align}\label{eq:estimator1}
	\eta_{1,\coarse}(T)^2 := h_T^2 \norm{(f+\Delta u_{1,\coarse})}{T}^2 + h_T \norm{ [\![\partial_{\n} u_{1,\coarse}]\!]}{\partial T\cap \Omega}^2 + h_T \norm{\phi - \partial_{\n} u_{1,\coarse}}{\partial T\cap \Gamma}^2
\end{align}
as well as 
\begin{align}\label{eq:estimator2}
	\eta_{2,\coarse}(T)^2 := h_T^2 \norm{\Delta u_{2,\coarse}}{T}^2 +  h_T \norm{ [\![ \partial_{\n} u_{2,\coarse}]\!] }{\partial T\cap \Omega}^2 + h_T \norm{(1-\Pi^\Gamma_\coarse)\nabla_\Gamma (K-1/2) (u_{1,\coarse} - g)}{\partial T\cap \Gamma}^2.
\end{align}
Here, $\n$ denotes the outward-pointing unit normal vector $\n$ on $\partial T$ and  $[\![ \partial_{\n} (\cdot)]\!]$ denotes as usual the normal jump. 
Note that the final term in \eqref{eq:estimator2} is well-defined, as $K:H^1(\Gamma)\to H^1(\Gamma)$ (see Section~\ref{eq:integral_operators}) and $u_{1,\coarse} - g \in H^1(\Gamma)$. 
We also define the combined indicator 
\begin{align}
	\eta_\coarse(T)^2 := \eta_{1,\coarse}(T)^2 + \eta_{2,\coarse}(T)^2. 
\end{align}
The corresponding error estimators read as
\begin{align}
	\eta_{1,\coarse}^2 := \sum_{T\in\TT_\coarse} \eta_{1,\coarse}(T)^2,
	\quad
	\eta_{2,\coarse}^2 := \sum_{T\in\TT_\coarse} \eta_{2,\coarse}(T)^2,
	\quad
	\eta_{\coarse}^2 := \sum_{T\in\TT_\coarse} \eta_{\coarse}(T)^2 = \eta_{1,\coarse}^2 + \eta_{2,\coarse}^2.
\end{align}
\end{subequations}

In the following proposition, we show that the combined estimator $\eta_\coarse$ is indeed reliable and efficient up to oscillations.
We define the local oscillations associated to the right-hand side $f$, the Neumann datum $\phi$, and the Dirichlet datum $g$ by
\begin{align*}
	\osc_{\rm RHS,\coarse}(T)^2 
	&:= h_T^2 \norm{(1-\Pi_\coarse^\Omega)f}{T}^2,
	\\
	\osc_{\rm N,\coarse}(T)^2 
	&:= h_T \norm{(1-\Pi^\Gamma_\coarse)\phi}{\partial T\cap \Gamma}^2,
	\\
	\osc_{\rm D,\coarse}(T)^2 
	&:= h_T \norm{(1-\Pi^\Gamma_\coarse)\nabla_\Gamma (K-1/2) (u_{1,\coarse} - g)}{\partial T\cap \Gamma}^2
\end{align*}
for all $T\in\TT_\coarse$ as well as the corresponding oscillations 
\begin{equation*}
	\osc_{\rm RHS,\coarse}^2 
	:= \sum_{T\in\TT_\coarse} \osc_{\rm RHS,\coarse}(T)^2, 
	\quad\osc_{\rm N,\coarse}^2 
	:= \sum_{T\in\TT_\coarse} \osc_{\rm N,\coarse}(T)^2, 
	\quad\osc_{\rm D,\coarse}^2 
	:= \sum_{T\in\TT_\coarse} \osc_{\rm D,\coarse}(T)^2.
\end{equation*}
Note that 
\begin{equation}\label{eq:osc_bound}
	\max \big\{\osc_{\rm RHS,\coarse}, \osc_{\rm N,\coarse}, \osc_{\rm D,\coarse} \big\} \le \eta_\coarse.
\end{equation}

\begin{proposition} \label{prop:reliability}
There exist $\const{rel}>0$ and $\const{eff}>0$ such that
\begin{equation}\label{eq:reliability_efficiency}
	\const{rel}^{-1} \norm{\u - \u_\coarse}{H^1(\Omega)} \le \eta_\coarse \le \const{eff} \big(\norm{\nabla (\u - \u_\coarse)}{\Omega} + \osc_{\rm RHS,\coarse} + \osc_{\rm N,\coarse} + \osc_{\rm D,\coarse} \big). 
\end{equation}
The constant $\const{rel}$ depends only on the domain $\Omega$, the shape-regularity of $\TT_\coarse$, the shapes of the boundary element patches $\bigcup\set{T\in\TT_\coarse|_\Gamma}{\overline T\cap \overline {T'} \neq \emptyset}$ for $T'\in\TT_\coarse|_\Gamma$, and the polynomial degree $p$. 
Notice that, if $\TT_\coarse$ is obtained from iterative newest vertex bisection of some initial conforming triangulation $\TT_0$, the number of different patch shapes is uniformly bounded; see, e.g., \cite[Proof of Proposition~3.1]{afkpp13}. 
The constant $\const{eff}$ depends only on the domain $\Omega$, the shape-regularity of $\TT_\coarse$, and the polynomial degree $p$. 
\end{proposition}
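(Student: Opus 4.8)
The plan is to exploit the product structure $\norm{\u - \u_\coarse}{H^1(\Omega)}^2 = \norm{u_1 - u_{1,\coarse}}{H^1(\Omega)}^2 + \norm{u_2 - u_{2,\coarse}}{H^1(\Omega)}^2$ and to reduce each component to a standard residual \textsl{a~posteriori} analysis: for $u_{1,\coarse}$ that of a pure Neumann problem, and for $u_{2,\coarse}$ that of a second-order problem with inhomogeneous Dirichlet data interpolated by the Scott--Zhang operator $J_\coarse^\Gamma$, for which the analysis of~\cite{afkpp13} applies. For reliability of the first component, recall that $u_{1,\coarse}$ is the Galerkin approximation of~\eqref{eq:fk1}; testing~\eqref{eq:u1_coarse} and the weak form of~\eqref{eq:fk1} with the constant function and invoking~\eqref{eq:compatibility} shows that the Galerkin orthogonality $\inner[\Omega]{\nabla(u_1 - u_{1,\coarse})}{\nabla v_\coarse} = 0$ holds for \emph{all} $v_\coarse \in \SS^p(\TT_\coarse)$, so the classical argument applies verbatim: starting from $\norm{\nabla(u_1 - u_{1,\coarse})}{\Omega}^2$, subtract the quasi-interpolant $J_\coarse^\Omega(u_1 - u_{1,\coarse})$, integrate by parts element-wise to produce the volume residual $f + \Delta u_{1,\coarse}$, the interior jumps $[\![\partial_\n u_{1,\coarse}]\!]$, and the boundary residual $\phi - \partial_\n u_{1,\coarse}$, and conclude via the local Scott--Zhang approximation and trace estimates together with the Poincar\'e inequality that $\norm{u_1 - u_{1,\coarse}}{H^1(\Omega)} \lesssim \eta_{1,\coarse}$.

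For reliability of the second component, I would reuse the auxiliary function $u_2^\coarse := L(K-1/2)(u_{1,\coarse} - g)$ from the proof of Proposition~\ref{prop:apriori}, namely the exact harmonic extension of the \emph{discrete} boundary datum; by construction $J_\coarse^\Gamma(u_2^\coarse|_\Gamma) = u_{2,\coarse}|_\Gamma$, so $u_{2,\coarse}$ is precisely the finite element approximation of $u_2^\coarse$ with $J_\coarse^\Gamma$-interpolated Dirichlet data in the sense of~\cite{afkpp13}. The triangle inequality splits $\norm{u_2 - u_{2,\coarse}}{H^1(\Omega)} \le \norm{u_2 - u_2^\coarse}{H^1(\Omega)} + \norm{u_2^\coarse - u_{2,\coarse}}{H^1(\Omega)}$. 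The first summand equals $\norm{L(K-1/2)(u_1 - u_{1,\coarse})}{H^1(\Omega)}$ and is bounded, by stability of $L$, of $K$, and of the trace operator together with the estimate just proved, by $\lesssim \norm{u_1 - u_{1,\coarse}}{H^1(\Omega)} \lesssim \eta_{1,\coarse}$ (compare~\eqref{eq:aux_efficiency}). For the second summand, the \textsl{a~posteriori} estimate of~\cite{afkpp13} for inhomogeneous Dirichlet problems gives $\norm{u_2^\coarse - u_{2,\coarse}}{H^1(\Omega)} \lesssim \eta_{2,\coarse}$: the three contributions to $\eta_{2,\coarse}(T)^2$ are exactly the volume residual (here $-\Delta u_2^\coarse = 0$), the interior jumps, and the Dirichlet-data oscillation $h_T\norm{(1-\Pi_\coarse^\Gamma)\nabla_\Gamma(K-1/2)(u_{1,\coarse}-g)}{\partial T\cap\Gamma}^2$ produced by $J_\coarse^\Gamma$ (well defined since $K$ maps $H^1(\Gamma)$ into itself and $u_{1,\coarse}-g \in H^1(\Gamma)$). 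This is where $\const{rel}$ picks up its dependence on the boundary patch shapes, exactly as in~\cite{afkpp13}. Adding the two components yields $\norm{\u - \u_\coarse}{H^1(\Omega)} \lesssim \eta_{1,\coarse} + \eta_{2,\coarse} \eqsim \eta_\coarse$, which is the left inequality of~\eqref{eq:reliability_efficiency}.

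For efficiency I would use the standard Verf\"urth bubble function technique locally and then sum. For $\eta_{1,\coarse}(T)$: split $f = \Pi_\coarse^\Omega f + (1-\Pi_\coarse^\Omega)f$ and $\phi = \Pi_\coarse^\Gamma\phi + (1-\Pi_\coarse^\Gamma)\phi$; the polynomial quantities $\Pi_\coarse^\Omega f + \Delta u_{1,\coarse}$, $[\![\partial_\n u_{1,\coarse}]\!]$, and $\Pi_\coarse^\Gamma\phi - \partial_\n u_{1,\coarse}$ (note that $\partial_\n u_{1,\coarse}$ is already a $\TT_\coarse|_\Gamma$-piecewise polynomial of degree $p-1$, hence left unchanged by $\Pi_\coarse^\Gamma$) are tested against interior and (boundary-)edge bubbles and, using $-\Delta u_1 = f$, $\partial_\n u_1 = \phi$ and element-wise integration by parts, are bounded by $\norm{\nabla(u_1 - u_{1,\coarse})}{\omega_T}$ on the associated patch $\omega_T$, while the remainders are precisely $\osc_{\rm RHS,\coarse}(T)$ and $\osc_{\rm N,\coarse}(T)$. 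For $\eta_{2,\coarse}(T)$: since $-\Delta u_2 = 0$ and $u_2 \in H^1(\Omega)$ has no normal jumps, the same bubble function arguments give $h_T\norm{\Delta u_{2,\coarse}}{T} + h_T^{1/2}\norm{[\![\partial_\n u_{2,\coarse}]\!]}{\partial T\cap\Omega} \lesssim \norm{\nabla(u_2 - u_{2,\coarse})}{\omega_T}$ with no oscillation, and the third contribution to $\eta_{2,\coarse}(T)^2$ is by definition $\osc_{\rm D,\coarse}(T)^2$. Summing over $T \in \TT_\coarse$, using the finite overlap of the patches and $(a^2+b^2+c^2+d^2)^{1/2} \le a+b+c+d$, gives the right inequality of~\eqref{eq:reliability_efficiency}, with $\const{eff}$ depending only on $\Omega$, the shape-regularity, and $p$.

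I expect the only non-routine point to be the reliability of the $u_2$-component: one must cleanly separate the perturbation of the Dirichlet datum ($u_{1,\coarse}$ in place of $u_1$), which is nonlocal through $K$ but harmless because it is absorbed into $\eta_{1,\coarse}$ by operator stability and the already-established reliability of the $u_1$-part, from the genuine discretization error of the Dirichlet-type finite element method with $J_\coarse^\Gamma$-interpolated data, which is precisely the situation treated in~\cite{afkpp13} and which is responsible for the surface-gradient oscillation term in $\eta_{2,\coarse}$ and for the patch-shape dependence of $\const{rel}$. Once $u_{2,\coarse}$ has been identified as the finite element approximation of $u_2^\coarse$, everything reduces to tools already available in the literature.
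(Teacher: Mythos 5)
Your proof is correct and follows essentially the same route as the paper: the same auxiliary function $u_2^\coarse = L(K-1/2)(u_{1,\coarse}-g)$, the same identification of $u_{2,\coarse}$ as the Scott--Zhang-interpolated Dirichlet approximation of $u_2^\coarse$ covered by~\cite{afkpp13}, and the same absorption of the datum perturbation into $\eta_{1,\coarse}$ via~\eqref{eq:aux_efficiency}. The only (harmless) divergence is in the efficiency of the interior terms of $\eta_{2,\coarse}$, where you run the bubble-function argument directly against the harmonic $u_2$, whereas the paper invokes the efficiency estimate of~\cite{afkpp13} for $u_2^\coarse$ and then passes to $u_2$ by the triangle inequality.
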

\begin{proof}
Standard arguments readily yield reliability and efficiency for the first term of the estimator, i.e.,
\begin{align*}
	\norm{u_1 - u_{1,\coarse}}{H^1(\Omega)} 
	\lesssim \eta_{1,\coarse} 
	\lesssim \norm{\nabla (u_1 - u_{1,\coarse})}{\Omega} + \osc_{\rm RHS,\coarse} + \osc_{\rm N,\coarse};
\end{align*}
see, e.g., \cite{ao11,verfuerth13} for details. 

For the second term, we recall $u_2^\coarse := L(K-1/2) (u_{1,\coarse} - g)$ from the proof of Proposition~\ref{prop:apriori}.
Then, \eqref{eq:aux_efficiency} as well as reliability for $u_1$ show that
\begin{equation*}
	\norm{u_2 - u_2^\coarse}{H^1(\Omega)}
	\lesssim \norm{ u_1 - u_{1,\coarse} }{H^{1}(\Omega)}
	\lesssim \eta_{1,\coarse}.
\end{equation*}

Noting that $u_{2,\coarse}$ is the Galerkin approximation to $u_2^\coarse$ with $u_{2,\coarse}|_\Gamma = J_\coarse^\Gamma u_2^\coarse$, we have that $\norm{u_2^\coarse - u_{2,\coarse}}{H^1(\Omega)} \lesssim \eta_{2,\coarse}$ thanks to the reliability estimate from \cite[Proposition~2.4]{afkpp13}.
Overall this shows the first inequality in~\eqref{eq:reliability_efficiency}. 

On the other hand, the efficiency estimate from~\cite[Proposition~2.4]{afkpp13} shows that
\begin{align*}
	\eta_{2,\coarse} 
	\lesssim 
	\norm{\nabla(u_2^\coarse - u_{2,\coarse})}{\Omega} + \osc_{\rm D,\coarse}.
\end{align*}
The triangle inequality further shows that 
\begin{equation*}
	\norm{\nabla (u_2^\coarse - u_{2,\coarse})}{\Omega}^2 
	\le \norm{\nabla (u_2 - u_{2,\coarse})}{\Omega} + \norm{\nabla (u_2 - u_2^\coarse)}{\Omega}.
\end{equation*}
The second inequality in~\eqref{eq:reliability_efficiency} then follows from~\eqref{eq:aux_efficiency} together with the~Poincar\'e inequality. 
\end{proof}

\section{Adaptive algorithm \& Optimal convergence}\label{sec:adaptivity}

\subsection{Adaptive algorithm}
Let $\TT_0$ be some given conforming initial triangulation of $\Omega$. 
We employ newest vertex bisection~\cite{stevenson08} as mesh-refinement strategy and abbreviate the set of all conforming triangulations that can be reached from $\TT_0$ by $\T$. 
For each $\TT_\coarse\in\T$ and marked elements $\MM_\coarse\subseteq\TT_\coarse$, let $\TT_\fine := \refine(\TT_\coarse,\MM_\coarse)$ be the coarsest conforming triangulation, where all $T\in\TT_\coarse$ have been refined, i.e., $\MM_\coarse \subseteq \TT_\coarse\setminus\TT_\fine$. 
We write $\TT_\fine \in \T(\TT_\coarse)$, if $\TT_\fine$ results from $\TT_\coarse$ by finitely many steps of refinement. 
In particular, we have that $\T = \T(\TT_0)$. 

We consider the following standard adaptive algorithm using D\"orfler marking. 

\begin{algorithm} \label{alg:adaptive_algorithm}
\textbf{Input:} Initial conforming triangulation $\TT_0$, polynomial degree $p$, D\"orfler parameter $0 < \theta \le 1$. \\
\textbf{Loop:} For each $\ell=0,1,2,\dots$, iterate the following steps~{\rm(i)--(iv)}:
\begin{itemize}
\item[\rm(i)] {\bf SOLVE:} Compute the discrete solutions $u_{1,\ell} \in \SS_*^p(\TT_\ell)$ of \eqref{eq:u1_coarse} and $u_{2,\ell} \in \SS^p(\TT_\ell)$ of \eqref{eq:u2_coarse}. 
\item[\rm(ii)] {\bf ESTIMATE:} Compute the error indicators $\eta_\ell(T)$ of~\eqref{eq:estimator} for all ${T}\in\TT_\ell$. 
\item[\rm(iii)] {\bf MARK:} Determine a minimal set of marked elements $\MM_\ell\subseteq\TT_\ell$ such that 
\begin{equation*}
	\theta \sum_{T\in\TT_\ell} \eta_\ell(T)^2 \le \sum_{T\in\MM_\ell} \eta_\ell(T)^2.
\end{equation*} 
\item[\rm(iv)] {\bf REFINE:} Generate the refined mesh $\TT_{\ell+1}:=\refine(\TT_\ell,\MM_\ell)$. 
\end{itemize}
\textbf{Output:} Refined meshes $\TT_\ell$, corresponding discrete solutions $\u_\ell = (u_{1,\ell}, u_{2,\ell})$, and 
error estimators $\eta_\ell$ for all $\ell \in \N_0$.\qed
\end{algorithm}

\subsection{Optimal convergence}
For $s>0$, we define 
\begin{equation*}
	\const{apx}^\eta(s) := \sup_{N\in\N_0} \min_{\TT_\coarse\in\T_N} (N+1)^s \eta_\coarse \in [0,\infty]
	\quad \text{with} \quad \T_N := \set{\TT_\coarse\in \T}{\#\TT_\coarse - \#\TT_0 \le N}.
\end{equation*}
By definition, $\const{apx}^\eta(s)<\infty$ implies that the error estimator $\eta_\coarse$ decays at least with rate $\OO((\#\TT_\coarse)^{-s})$ on the optimal meshes $\TT_\coarse$. 
The following main theorem states that each possible rate $s>0$ is indeed realized by Algorithm~\ref{alg:adaptive_algorithm}. 

\begin{theorem}\label{thm:estimator_convergence}
For arbitrary $0<\theta\le 1$, there exist $\const{lin}>0$ and $0<\rho_{\rm lin}<1$ such that 
\begin{equation}
	\eta_{\ell+j}^2 
	\le \const{lin} \rho_{\rm lin}^j \eta_\ell^2 \quad \text{for all }\ell,j\in\N_0.
\end{equation}
Moreover, there exists $0<\theta_\star<1$ such that for all $0<\theta<\theta_\star$ and all $s>0$, there exists $c_{\rm opt}, \const{opt}>0$ with 
\begin{equation}
	c_{\rm opt} \const{apx}^\eta(s) 
	\le \sup_{\ell\in\N_0} (\#\TT_\ell - \#\TT_0 + 1)^s \eta_\ell
	\le \const{opt} \const{apx}^\eta(s).
\end{equation}
The constant $\theta_\star$ depends only the domain $\Omega$, the initial triangulation $\TT_0$, and the used polynomial degree $p$, the constants $\const{lin}, \rho_{\rm lin}$ depend additionally on the D\"orfler parameter $\theta$, 
and the constant $\const{opt}$ depends additionally on $s$, while $c_{\rm opt}$ depends only on $\TT_0$. 
\end{theorem}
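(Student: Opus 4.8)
The plan is to apply the abstract framework of \cite{cfpp14}, verifying the so-called axioms of adaptivity for the estimator $\eta_\coarse$: stability on non-refined elements, reduction on refined elements, general quasi-orthogonality, and discrete reliability. Once these four properties are established with the appropriate constants, linear convergence with rate $\rho_{\rm lin}<1$ and the optimality statement (for $\theta<\theta_\star$) follow verbatim from the main results of \cite{cfpp14}. The structure of the proof is thus to reduce everything to checking these axioms, which is what Section~\ref{sec:axioms} of the paper will do; the present sketch describes the order of that verification.

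First I would establish \emph{stability} and \emph{reduction}: for two meshes $\TT_\coarse$ and a refinement $\TT_\fine\in\T(\TT_\coarse)$, one needs
$\big|\eta_\fine(\TT_\coarse\cap\TT_\fine) - \eta_\coarse(\TT_\coarse\cap\TT_\fine)\big| \lesssim \norm{\nabla(\u_\fine - \u_\coarse)}{\Omega} + (\text{a suitable data term})$
and $\eta_\fine(\TT_\fine\setminus\TT_\coarse)^2 \le q\,\eta_\coarse(\TT_\coarse\setminus\TT_\fine)^2 + \dots$ for some $q<1$. The volume and jump contributions in $\eta_{1,\coarse}$ and $\eta_{2,\coarse}$ are handled by the standard residual-estimator arguments (inverse estimates, trace inequalities, the halving of mesh-size under bisection); the genuinely new ingredient is the boundary term $h_T\norm{(1-\Pi^\Gamma_\coarse)\nabla_\Gamma(K-1/2)(u_{1,\coarse}-g)}{\partial T\cap\Gamma}$, which involves the nonlocal operator $K$. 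Here the crucial tool is the local inverse estimate for the double-layer operator from \cite{fkmp13,affkmp17}, which allows one to control $h_T^{1/2}\norm{\nabla_\Gamma (K-1/2)(u_{1,\coarse}-u_{1,\fine})}{\partial T\cap\Gamma}$ by a weighted $H^{-1/2}$-type norm of $u_{1,\coarse}-u_{1,\fine}$ on a patch, hence ultimately by $\norm{u_1 - u_{1,\coarse}}{H^1(\Omega)}+\norm{u_1-u_{1,\fine}}{H^1(\Omega)}$ via the trace theorem; the projection $\Pi^\Gamma_\coarse$ and Scott--Zhang stability take care of the oscillation-type remainder, exactly as in the rate-optimality proofs of adaptive BEM surveyed in \cite{ffhkp15}.

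Next I would prove \emph{discrete reliability}: for $\TT_\fine\in\T(\TT_\coarse)$ one needs
$\norm{\nabla(\u_\fine-\u_\coarse)}{\Omega} \lesssim \eta_\coarse(\RR_{\coarse\fine})$, where $\RR_{\coarse\fine}\subseteq\TT_\coarse$ is a set of elements with $\#\RR_{\coarse\fine}\lesssim\#(\TT_\coarse\setminus\TT_\fine)$ that contains the refined region. This splits along the two components: for $u_{1,\fine}-u_{1,\coarse}$ the argument is the classical localized residual bound for the Poisson problem, using a Scott--Zhang-type quasi-interpolation that is the identity away from $\RR_{\coarse\fine}$; for $u_{2,\fine}-u_{2,\coarse}$ one invokes the discrete reliability for inhomogeneous-Dirichlet FEM from \cite[Proposition~2.4 and its proof]{afkpp13}, which already treats exactly the Galerkin problem \eqref{eq:u2_coarse} with $J^\Gamma$-approximated boundary data, combined with the stability estimate $\norm{u_2^\coarse-u_2^\fine}{H^1(\Omega)}\lesssim\norm{(K-1/2)(u_{1,\coarse}-u_{1,\fine})}{H^{1/2}(\Gamma)}\lesssim\norm{u_{1,\coarse}-u_{1,\fine}}{H^1(\Omega)}$ from the proof of Proposition~\ref{prop:apriori}. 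The resolved-element set $\RR_{\coarse\fine}$ must include a boundary neighborhood large enough to control the $K$-dependent term, which is where the patch-shape dependence in the constants (noted already in Proposition~\ref{prop:reliability}) enters.

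Finally I would verify \emph{general quasi-orthogonality}: $\sum_{k=\ell}^{\ell+N}\big(\norm{\nabla(\u_{k+1}-\u_k)}{\Omega}^2 - \epsilon\,\norm{\nabla(\u-\u_k)}{\Omega}^2\big) \lesssim \norm{\nabla(\u-\u_\ell)}{\Omega}^2$, or the summed variant sufficient for \cite{cfpp14}. Because the hybrid method lacks the Pythagorean identity of a single symmetric Galerkin scheme (the pair $(u_1,u_2)$ is computed by two coupled but distinct projections), this is the analogue of the quasi-orthogonality in \cite[Section~3]{afkpp13}: for the $u_1$-component the Galerkin orthogonality in the $\norm{\nabla(\cdot)}{\Omega}$-inner product is exact, while for the $u_2$-component one uses that $u_{2,k}$ is a Galerkin approximation with perturbed Dirichlet data $J_k^\Gamma(K-1/2)(u_{1,k}-g)$ and controls the resulting defect by the already-established stability in terms of $\norm{u_1-u_{1,k}}{H^1(\Omega)}$, which is itself summably convergent by the $u_1$-part. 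Combining the four axioms, Theorem~\ref{thm:estimator_convergence} follows from \cite[Theorem~4.1]{cfpp14} (linear convergence) and \cite[Theorem~4.5]{cfpp14} (rate optimality), with $\theta_\star$ governed by the stability and discrete-reliability constants. I expect the main obstacle to be the quasi-orthogonality step combined with the nonlocality of $K$: one must make sure the perturbation introduced by the coupling between $u_{1,k}$ and the Dirichlet data of $u_{2,k}$ does not destroy the summability, and that all boundary-integral-operator estimates are genuinely local (this is exactly what \cite{fkmp13,affkmp17} provide) so that the resolved sets stay of bounded cardinality.
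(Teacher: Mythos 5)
Your proposal is correct and follows essentially the same route as the paper: reduce the theorem to the four axioms of adaptivity of \cite{cfpp14} and verify them using the local inverse estimates for the double-layer operator from \cite{fkmp13,affkmp17} for stability/reduction, the inhomogeneous-Dirichlet results of \cite{afkpp13} (via a discrete lifting) for discrete reliability, and a perturbed quasi-orthogonality exploiting the exact Pythagoras identity for the $u_1$-component. You also correctly identify the main technical obstacle, namely that the Dirichlet data $(K-1/2)(u_{1,\ell}-g)$ change with $\ell$, which is precisely why the paper only obtains quasi-orthogonality with a positive $\varepsilon_2$-term rather than the exact version of \cite{afkpp13}.
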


\begin{proof}
According to \cite[Theorem~4.1]{cfpp14}, the theorem follows if we can prove the so-called \emph{axioms of adaptivity} for the error estimator, namely stability on nonrefined elements, stated and verified in Section~\ref{sec:stability}, reduction on refined elements, stated and verified in Section~\ref{sec:reduction}, discrete reliability, stated and verified in Section~\ref{sec:discrete_reliability}, and general quasi-orthogonality, stated and verified in Section~\ref{sec:orthogonality}. 
In particular, we choose the error measure from \cite[Section~2.2]{cfpp14} as ${\rm d}[\TT_\coarse;\boldsymbol{v},\boldsymbol{w}] := \norm{\nabla (\boldsymbol{v} - \boldsymbol{w})}{\Omega}$ for all $\TT_\coarse\in\T$ and all $\boldsymbol{v},\boldsymbol{w} \in H_*^1(\Omega) \times H^1(\Omega)$. 
The required mesh-refinement properties from~\cite[Section~2.4]{cfpp14} are satisfied according to~\cite[Section~2.5]{cfpp14} for the considered newest vertex bisection.
\end{proof}

\begin{remark}
Note that the estimator is equivalent to the total error, i.e.,
\begin{equation*}
	\eta_\coarse \eqsim \norm{\u - \u_\coarse}{H^1(\Omega)} + \osc_{\rm RHS,\coarse} + \osc_{\rm N,\coarse} + \osc_{\rm D,\coarse},
\end{equation*}
which is a direct consequence of~\eqref{eq:osc_bound} and \eqref{eq:reliability_efficiency}.
In particular, Theorem~\ref{thm:estimator_convergence} holds analogously for the total error instead of the estimator.

Instead, \cite{feischl22} proves optimal convergence of a standard adaptive algorithm employing the Johnson--N\'ed\'elec coupling~\cite{jn1980}, which approximates $(u,\partial_{\n}\uext)$ rather than $(u_1,u_2)$; cf.~Remark~\ref{rem:cea}. 
More precisely, optimal convergence of an estimator that measures the discretization error of $(u,\partial_{\n}\uext)$ is shown. 
It is only known that this estimator is an upper bound, but not necessarily a lower bound (not even up to oscillations), for this error.
\end{remark}

\section{Axioms of adaptivity}\label{sec:axioms}

In this section, we state and verify the four axioms of adaptivity from \cite{cfpp14} required to prove Theorem~\ref{thm:estimator_convergence}. 
We note that \cite{feischl22} verifies these axioms in case of the Johnson--N\'ed\'elec coupling~\cite{jn1980}. 
As for the Johnson--N\'ed\'elec coupling (and standard BEM~\cite{ffhkp15}), the crucial ingredient for the first two axioms, stability and reduction, are local inverse estimates for (non-local) boundary integral operators~\cite{affkmp17}, and the crucial ingredient for the third axiom, discrete reliability, are properties of the Scott--Zhang projection. 
For the proof of discrete reliability, we can particularly employ~\cite{afkpp13}, which treats the numerical approximation of the Poisson model problem with inhomogeneous Dirichlet boundary conditions discretized via the Scott--Zhang projection. 
We also took inspiration from \cite{afkpp13} to prove the fourth axiom, general quasi-orthogonality. 
However, \cite{afkpp13} only deals with the projection of fixed Dirichlet data, whereas the Dirichlet data~$(K-1/2)(u_{1,\coarse} - g)$ that are projected onto $\SS^p(\TT_\coarse|_\Gamma)$ for the computation of $u_{2,\coarse}\vert_{\Gamma}$ change in every step of the adaptive algorithm. 
This poses an additional technical challenge. 
Indeed, \cite{afkpp13} could even prove the quasi-orthogonality~\eqref{eq:quasi_orthogonality} below with $\varepsilon_2 = 0$. 
We can only show \eqref{eq:quasi_orthogonality} for some $\varepsilon_2 > 0$, from which we then derive the desired general quasi-orthogonality. 

For the last axiom, \cite{feischl22} uses a completely different route that covers general uniform inf-sup stable problems, including the Johnson-N\'ed\'elec coupling. 
The developed abstract framework is not applicable for the hybrid FEM-BEM method~\eqref{eq:u1_coarse}--\eqref{eq:u2_coarse}, 
as $(u_{1,\coarse},u_{2,\coarse})$ does not necessarily correspond to the Galerkin discretization of some inf-sup stable problem on the continuous level.

\subsection{Stability on nonrefined elements}\label{sec:stability}

We show stability on nonrefined elements, i.e., there exists a uniform constant $\const{stab}>0$ depending only on the domain $\Omega$, the shape-regularity of the considered triangulations (and thus on $\TT_0$), and the used polynomial degree $p$ such that for all refinements $\TT_\fine\in\T(\TT_\coarse)$ of a conforming triangulation $\TT_\coarse\in\T$, it holds that 
\begin{align}
	\Big| \Big(\sum_{T\in\TT_\coarse\cap\TT_\fine} \eta_\fine(T)^2\Big)^{1/2} - \Big(\sum_{T\in\TT_\coarse\cap\TT_\fine} \eta_\coarse(T)^2\Big)^{1/2} \Big|
	\le 
	\const{stab} \norm{\nabla (\u_\fine - \u_\coarse) }{\Omega}.
\end{align}

The inverse triangle inequality and the inequality $(a^2 + b^2)^{1/2} \le a+b$ for all $a,b\ge 0$ give that 
\begin{align*}
	&\Big| \Big(\sum_{T\in\TT_\coarse\cap\TT_\fine} \eta_\fine(T)^2\Big)^{1/2} - \Big(\sum_{T\in\TT_\coarse\cap\TT_\fine} \eta_\coarse(T)^2\Big)^{1/2} \Big|
	\\
	&\quad \le 
	\Big| \Big(\sum_{T\in\TT_\coarse\cap\TT_\fine} \eta_{1,\fine}(T)^2\Big)^{1/2} - \Big(\sum_{T\in\TT_\coarse\cap\TT_\fine} \eta_{1,\coarse}(T)^2\Big)^{1/2} \Big|
	\\
	&\qquad+\Big| \Big(\sum_{T\in\TT_\coarse\cap\TT_\fine} \eta_{2,\fine}(T)^2\Big)^{1/2} - \Big(\sum_{T\in\TT_\coarse\cap\TT_\fine} \eta_{2,\coarse}(T)^2\Big)^{1/2} \Big|.
\end{align*}

Standard arguments show stability for the first term, i.e., 
\begin{align*}
	\Big| \Big(\sum_{T\in\TT_\coarse\cap\TT_\fine} \eta_{1,\fine}(T)^2\Big)^{1/2} - \Big(\sum_{T\in\TT_\coarse\cap\TT_\fine} \eta_{1,\coarse}(T)^2\Big)^{1/2} \Big|
	\lesssim \norm{\nabla (u_{1,\fine} - u_{1,\coarse})}{\Omega};
\end{align*}
see, e.g., \cite{ckns08}.

For the the second term, we apply the inverse triangle inequality (twice) and the fact that $\Pi^\Gamma_\coarse = \Pi^\Gamma_\fine$ on $T\in\ \TT_\coarse\cap \TT_\fine$ to see that
\begin{align*}
	&\Big| \Big(\sum_{T\in\TT_\coarse\cap\TT_\fine} \eta_{2,\fine}(T)^2\Big)^{1/2} - \Big(\sum_{T\in\TT_\coarse\cap\TT_\fine} \eta_{2,\coarse}(T)^2\Big)^{1/2} \Big|
	\le 
	\Big(\sum_{T\in\TT_\coarse\cap\TT_\fine} \big| \eta_{2,\fine}(T) - \eta_{2,\coarse}(T) \big|^2\Big)^{1/2}
	\\
	&\le 
	\Big(\sum_{T\in\TT_\coarse\cap\TT_\fine} h_T^2 \norm{\Delta (u_{2,\fine} - u_{2,\coarse})}{T}^2 + h_T \norm{ [\![ \partial_{\n} (u_{2,\fine} - u_{2,\coarse})]\!]}{\partial T\cap \Omega}^2 \\
	&\qquad + h_T \norm{(1-\Pi^\Gamma_\coarse)\nabla_\Gamma (K-1/2) (u_{1,\fine}-u_{1,\coarse})}{\partial T\cap \Gamma}^2\Big)^{1/2}. 
\end{align*}
Since $(K-1/2)1 = -1$ is constant, we can replace the difference $u_{2,\fine} - u_{2,\coarse}$ on the right-hand side by $u_{2,\fine} - u_{2,\coarse} - c$ with $c:=|\Omega|^{-1} \int_\Omega (u_{2,\fine} - u_{2,\coarse}) \d\x$.
Then, an inverse inequality, a trace inequality, and the Poincar\'e inequality show for the first two terms that 
\begin{align*}
	\sum_{T\in\TT_\coarse\cap\TT_\fine} h_T^2 \norm{\Delta (u_{2,\fine} - u_{2,\coarse})}{T}^2 + h_T \norm{ [\![ \partial_{\n} (u_{2,\fine} - u_{2,\coarse})]\!]}{\partial T\cap \Omega}^2 
	\lesssim 
	\norm{\nabla (u_{2,\fine} - u_{2,\coarse})}{\Omega}^2;
\end{align*}
see, e.g., \cite{ckns08} for details. 
For the second term, we use the fact that that $\nabla_\Gamma(u_{1,\fine} - u_{1,\coarse} - c)$ is a polynomial of degree $p-1$ on $T\in\TT_\coarse\cap\TT_\fine$, stability of $\Pi^\Gamma_\coarse$, the inverse inequality~\cite[Corollary~3.2]{affkmp17}, the trace inequality, and the Poincar\'e inequality to see that 
\begin{align*}
	&\sum_{T\in\TT_\coarse\cap\TT_\fine} h_T \norm{(1-\Pi^\Gamma_\coarse)\nabla_\Gamma (K-1/2) (u_{1,\fine}-u_{1,\coarse})}{\partial T\cap \Gamma}^2
	\\
	&\qquad =
	\sum_{T\in\TT_\coarse\cap\TT_\fine} h_T \norm{(1-\Pi^\Gamma_\coarse)\nabla_\Gamma K (u_{1,\fine}-u_{1,\coarse} - c )}{\partial T\cap \Gamma}^2
	\\
	&\qquad \lesssim \norm{u_{1,\fine}-u_{1,\coarse} -c }{H^{1/2}(\Gamma)}^2 
	\lesssim \norm{\nabla(u_{1,\fine}-u_{1,\coarse})}{\Omega}^2.
\end{align*}
Overall, this concludes the proof. \hfill$\square$

\subsection{Reduction on refined elements} \label{sec:reduction}
We show reduction on nonrefined elements, i.e., there exist a generic contraction constant $0<\rho_{\rm red}<1$ and a uniform constant $\const{stab}>0$ 
depending only on the domain $\Omega$, the shape-regularity of the considered triangulations (and thus on $\TT_0$), and the used polynomial degree $p$ such that for all refinements $\TT_\fine\in\T(\TT_\coarse)$ of a conforming triangulation $\TT_\coarse\in\T$, it holds that 
\begin{align}
	\sum_{T\in\TT_\fine\setminus\TT_\coarse} \eta_\fine(T)^2 
	\le 
	\rho_{\rm red}^2 \sum_{T\in\TT_\coarse\setminus\TT_\fine} \eta_\coarse(T)^2 
	+ \const{red}^2 \norm{\nabla (\u_\fine - \u_\coarse) }{\Omega}^2.
\end{align}

Indeed, for the component $u_1$, this follows from standard arguments as in Section~\ref{sec:stability}, i.e.,
\begin{align}
	\sum_{T\in\TT_\fine\setminus\TT_\coarse} \eta_{1,\fine}(T)^2 
	\le 
	\rho_{\rm 1, red}^2 \sum_{T\in\TT_\coarse\setminus\TT_\fine} \eta_{1,\coarse}(T)^2 
	+ \const{1,red}^2 \norm{\nabla (u_{1,\fine} - u_{1,\coarse}) }{\Omega}^2
\end{align}
with constants $0<\rho_{\rm 1, red}<1$ and $\const{1,red} > 0$. 

Standard arguments also show for the volume residual and the jump terms in the definition of the estimator corresponding to the second component~\eqref{eq:estimator2} that
\begin{align*}
	&\sum_{T\in\TT_\fine\setminus\TT_\coarse}  h_T^2 \norm{\Delta u_{2,\fine}}{T}^2 + h_T \norm{ [\![ \partial_{\n} u_{2,\fine}]\!] }{\partial T\cap \Omega}^2 
	\\ 
	&\quad \le \tilde{\rho}_{\rm 2,red}^2 \Big(\sum_{T\in\TT_\coarse\setminus\TT_\fine} h_T^2 \norm{\Delta (u_{2,\fine} - u_{2,\coarse})}{T}^2
	+  h_T \norm{ [\![ \partial_{\n} u_{2,\coarse}]\!] }{\partial T\cap \Omega}^2 \Big)
	+ \tilde{C}_{\rm red,2}^2 \norm{\nabla (u_{2,\fine} - u_{2,\coarse}) }{\Omega}^2
\end{align*}
with constants $0<\tilde{\rho}_{\rm 2,red}<1$ and $\tilde{C}_{\rm red}>0$.
The triangle and the Young inequality, the inclusion $\PP^{p-1}(\TT_\coarse|_\Gamma)\subseteq \PP^{p-1}(\TT_\fine|_\Gamma)$, and the fact that $\bigcup(\TT_\fine \setminus \TT_\coarse) = \bigcup(\TT_\coarse \setminus \TT_\fine)$, where $h_T \le 2^{-1/d} h_{T'}$ for all $T'\in \TT_\coarse \setminus \TT_\fine$ and $T\subseteq T'$ with $T\in \TT_\fine \setminus \TT_\coarse$, show that 
\begin{align*}
	&\sum_{T\in\TT_\fine\setminus\TT_\coarse} h_T \norm{(1-\Pi^\Gamma_\fine)\nabla_\Gamma (K-1/2) (u_{1,\fine} - g)}{\partial T\cap \Gamma}^2 
	\\
	&\quad\le 
	(1+\delta) \sum_{T\in\TT_\fine\setminus\TT_\coarse} h_T \norm{(1-\Pi^\Gamma_\coarse)\nabla_\Gamma (K-1/2) (u_{1,\coarse} - g)}{\partial T\cap \Gamma}^2 
	\\
	&\quad\quad + (1+\delta^{-1}) \sum_{T\in\TT_\fine\setminus\TT_\coarse}  h_T \norm{(1-\Pi^\Gamma_\fine)\nabla_\Gamma (K-1/2) (u_{1,\fine}-u_{1,\coarse})}{\partial T\cap \Gamma}^2
	\\
	&\quad\le 
	(1+\delta) 2^{-1/d} \sum_{T'\in\TT_\coarse\setminus\TT_\fine} h_{T'} \norm{(1-\Pi^\Gamma_\coarse)\nabla_\Gamma (K-1/2) (u_{1,\coarse} - g)}{\partial T'\cap \Gamma}^2 
	\\
	&\quad\quad + (1+\delta^{-1}) \sum_{T\in\TT_\fine\setminus\TT_\coarse} h_{T} \norm{(1-\Pi^\Gamma_\fine)\nabla_\Gamma (K-1/2) (u_{1,\fine}-u_{1,\coarse})}{\partial T\cap \Gamma}^2.
\end{align*}
The second sum can be bounded as in Section~\ref{sec:stability} by $\norm{\nabla (u_{1,\fine}-u_{1,\coarse})}{\Omega}^2$, and we conclude the proof by choosing $\delta>0$ sufficiently small. \hfill$\square$

\subsection{Discrete reliability}\label{sec:discrete_reliability}

We show discrete reliability, i.e., there exist uniform constants $\const{drel}, \const{ref}>0$ depending only the domain $\Omega$, the initial triangulation $\TT_0$ (or more precisely on the shape-regularity of the considered triangulations and the patch shapes on the boundary, see~Proposition~\ref{prop:reliability}), and the used polynomial degree $p$ such that for all refinements $\TT_\fine\in\T(\TT_\coarse)$ of a conforming triangulation $\TT_\coarse\in\T$, there exists a set $\TT_\coarse \setminus \TT_\fine \subseteq \mathcal{R}_{\coarse,\fine} \subseteq \TT_\coarse$ with $\#\RR_{\coarse,\fine} \le \const{ref} \#(\TT_\coarse \setminus \TT_\fine)$ and
\begin{align}\label{eq:discrete_reliability}
	\norm{\nabla(\u_\fine - \u_\coarse)}{\Omega}
	\le \norm{\u_\fine - \u_\coarse}{H^1(\Omega)}
	\le \const{drel} \Big(\sum_{T\in\RR_{\coarse,\fine}} \eta_\coarse(T)^2 \Big)^{1/2}.
\end{align}

Standard arguments show the assertion for the first component $u_1$, i.e., 
\begin{equation} \label{eq:discrete_reliability_1}
	\norm{u_{1,\fine} - u_{1,\coarse}}{H^1(\Omega)}
	\lesssim \Big(\sum_{T\in\RR_{1,\coarse,\fine}} \eta_{1,\coarse}(T)^2\Big)^{1/2}
\end{equation}
for some $\TT_\coarse \setminus \TT_\fine \subseteq \mathcal{R}_{1,\coarse,\fine} \subseteq \TT_\coarse$ with $\#\RR_{1,\coarse,\fine} \lesssim \#(\TT_\coarse \setminus \TT_\fine)$; see, e.g., \cite{ckns08}. 

For the second component $u_2$, we require a discrete version of the harmonic lifting operator $L:H^{1/2}(\Gamma) \to H^1(\Omega)$ from~\eqref{eq:lifting}: 
Given $w\in H^{1/2}(\Gamma)$, let $L_\fine w \in \SS^p(\TT_\fine)$ with $(L_\fine w)|_\Gamma = J_\fine^\Gamma w$ such that 
\begin{equation}\label{eq:discrete_lifting}
	\inner[\Omega]{\nabla L_\fine w}{\nabla v_\fine}
	= 0
	\quad \text{for all } v_\fine \in \SS^p_0(\TT_\fine).
\end{equation}
From $(J_\fine^\Omega L (\cdot))|_\Gamma = J_\fine^\Gamma (\cdot)$ (see~\eqref{eq:sz_restricted}), we conclude that $(L_\fine - J_\fine^\Omega L J_\fine^\Gamma) w\in \SS^p_0(\TT_\fine)$ is the Galerkin approximation of $(L J_\fine^\Gamma - J_\fine^\Omega L J_\fine^\Gamma) w \in H^1_0(\Omega)$. 
The triangle inequality and stability of the Galerkin approximation (together with Friedrichs' inequality), of $L J_\fine^\Gamma$, and of $J_\fine^\Omega L J_\fine^\Gamma$ imply that
\begin{align*}
	\norm{L_\fine w}{H^1(\Omega)} 
	&\le \norm{(L_\fine - J_\fine^\Omega L J_\fine^\Gamma) w}{H^1(\Omega)} + \norm{J_\fine^\Omega L J_\fine^\Gamma w}{H^1(\Omega)}
	\\
	&\lesssim \norm{L J_\fine^\Gamma w}{H^1(\Omega)} + \norm{J_\fine^\Omega L J_\fine^\Gamma w}{H^1(\Omega)}
	\lesssim \norm{w}{H^{1/2}(\Gamma)},
\end{align*}
i.e., $L_\fine: H^{1/2}(\Omega) \to H^1(\Omega)$ is uniformly bounded. 
In particular, we have by definition that $u_{2,\fine} = L_\fine(K-1/2)(u_{1,\fine} - g)$. 
We introduce $u_{2,\fine}^\coarse := L_\fine(K-1/2) (u_{1,\coarse} - g)$ and apply the triangle inequality to see that
\begin{equation*}
	\norm{u_{2,\fine} - u_{2,\coarse}}{H^1(\Omega)} 
	\le \norm{u_{2,\fine} - u_{2,\fine}^\coarse}{H^1(\Omega)} + \norm{u_{2,\fine}^\coarse - u_{2,\coarse}}{H^1(\Omega)}.
\end{equation*}
Noting that $u_{2,\fine} = L_\fine(K-1/2)(u_{1,\fine} - g)$, uniform stability of $L_\fine$, of $K$, and of the trace operator yield together with discrete reliability~\eqref{eq:discrete_reliability_1} for $u_1$ that 
\begin{align*}
	\norm{u_{2,\fine} - u_{2,\fine}^\coarse}{H^1(\Omega)}
	&\lesssim \norm{(K-1/2)(u_{1,\fine} - u_{1,\coarse}) }{H^{1/2}(\Gamma)}
	\\
	&\lesssim \norm{u_{1,\fine} - u_{1,\coarse} }{H^{1/2}(\Gamma)}
	\lesssim \norm{u_{1,\fine} - u_{1,\coarse} }{H^{1}(\Omega)}
	\\
	&\lesssim \Big(\sum_{T\in\RR_{1,\coarse,\fine}} \eta_{1,\coarse}(T)\Big)^{1/2}.
\end{align*}
Moreover, the discrete reliability of \cite[Proposition~6.1]{afkpp13} states that 
\begin{align*}
	\norm{u_{2,\fine}^\coarse - u_{2,\coarse}}{H^1(\Omega)}
	\lesssim \Big(\sum_{T\in\RR_{2,\coarse,\fine}} \eta_{2,\coarse}(T)\Big)^{1/2}
\end{align*}
for some $\TT_\coarse \setminus \TT_\fine \subseteq \mathcal{R}_{2,\coarse,\fine} \subseteq \TT_\coarse$ with $\#\RR_{2,\coarse,\fine} \lesssim \#(\TT_\coarse \setminus \TT_\fine)$.
Taking $\RR_{\coarse,\fine} := \RR_{1,\coarse,\fine} \cup \RR_{2,\coarse,\fine}$, we conclude the proof. \hfill$\square$

\subsection{General quasi-orthogonality}\label{sec:orthogonality}

We show in two steps general quasi-orthogonality, i.e., for all $\varepsilon_{\rm qo}>0$, there exists $\const{qo} = \const{qo}(\varepsilon_{\rm qo}) > 0$ 
depending only the domain $\Omega$, the initial triangulation $\TT_0$, and the used polynomial degree $p$ such that 
\begin{align}\label{eq:general_quasi_orthogonality}
	\sum_{j = \ell}^{\ell + N} \norm{\nabla(\u_{j+1} - \u_{j} )}{\Omega}^2 - \varepsilon_{\rm qo} \eta_j^2
	\le 
	\const{qo} \eta_\ell^2
	\quad \text{for all }\ell,N \in \N_0.
\end{align}

{\bf Step~1:} In this step, we prove that for arbitrary fixed $\varepsilon_{\rm qo}>0$, there exist $\varepsilon_1,\varepsilon_2>0$ with $\varepsilon_1 \const{rel}^2 + \varepsilon_2 \le \varepsilon_{\rm qo}$ such that for all $j\in\N_0$, 
\begin{subequations}\label{eq:quasi_orthogonality}
\begin{align}\label{eq:quasi_orthogonality_main}
\begin{split}
	\norm{\nabla (\u_{j+1} - \u_j) }{\Omega}^2 
	\le 
	\norm{\nabla (\u - \u_j) }{\Omega}^2  - (1-\varepsilon_1) \norm{\nabla (\u - \u_{j+1}) }{\Omega}^2
	+ \varepsilon_2 \eta_j^2
	+ \alpha_j^2 - \alpha_{j+1}^2.
\end{split}
\end{align}
For all $j \in \N_0$, the auxiliary terms $\alpha_j$ are nonnegative numbers possibly depending on $\varepsilon_1$ and $\varepsilon_2$ with 
\begin{align}\label{eq:bound_osc}
	\alpha_j \le C_\alpha(\varepsilon_1,\varepsilon_2) \eta_j
\end{align}
\end{subequations}
for some constant $C_\alpha(\varepsilon_1,\varepsilon_2)>0$. 
We split this step into six substeps. 

{\bf Step~1.1:}
For the first component $u_1$, Galerkin orthogonality even yields the Pythagoras identity 
\begin{align*}
	\norm{\nabla (u_{1,j+1} - u_{1,j}) }{\Omega}^2 
	=
	\norm{\nabla (u_1 - u_{1,j}) }{\Omega}^2 - \norm{\nabla (u_1 - u_{1,j+1}) }{\Omega}^2.
\end{align*}

{\bf Step~1.2:}
For the second component $u_2$, we start with the following elementary identity
\begin{align*}
	&\norm{\nabla (u_{2} - u_{2,j}) }{\Omega}^2 
	= 
	\norm{\nabla (u_{2} - u_{2,j+1}  +  u_{2,j+1} - u_{2,j}) }{\Omega}^2 
	\\
	&\quad=
	\norm{\nabla (u_2 - u_{2,j+1}) }{\Omega}^2 + 2 \inner[\Omega]{\nabla (u_2 - u_{2,j+1}) }{\nabla (u_{2,j+1} - u_{2,j}) } + \norm{\nabla (u_{2,j+1} - u_{2,j}) }{\Omega}^2.
\end{align*}
Rearranging this identity, we see  that 
\begin{align}
	\notag
	\norm{\nabla (u_{2,j+1} - u_{2,j}) }{\Omega}^2
	=
	\norm{\nabla (u_{2} - u_{2,j}) }{\Omega}^2 
	- \norm{\nabla (u_2 - u_{2,j+1}) }{\Omega}^2
	\\
	\label{eq:qo_scalar_product}
	- 2 \inner[\Omega]{\nabla (u_2 - u_{2,j+1}) }{\nabla (u_{2,j+1} - u_{2,j}) }.
\end{align}

{\bf Step~1.3:}
It remains to estimate the last term on the right-hand side of \eqref{eq:qo_scalar_product}. 
Recalling the discrete lifting operator from~\eqref{eq:discrete_lifting}, we set  $u_{2,j+1}^j:= L_{j+1} J_{j}^\Gamma(K-1/2)(u_{1,j} - g)\in \SS^p(\TT_{j+1})$ with $u_{2,j+1}^j|_\Gamma = J_{j+1}^\Gamma J_{j}^\Gamma(K-1/2)(u_{1,j} - g) = J_{j}^\Gamma(K-1/2)(u_{1,j} - g)$. 
Then, the difference $u_{2,j+1}^j - u_{2,j}$ lies in $\SS^p_0(\TT_{j+1})$. 
Moreover, we note that $u_{2,j+1} = L_{j+1} (K-1/2)(u_{1,j+1} - g) = L_{j+1} J_{j+1}^\Gamma(K-1/2)(u_{1,j+1} - g)$. 
For arbitrary $\varepsilon_1>0$, Galerkin orthogonality, the Young inequality, and uniform stability of $L_{j+1}$ hence imply for the $L^2$-scalar product~\eqref{eq:qo_scalar_product} that 
\begin{align*}
	&2 |\inner[\Omega]{\nabla (u_2 - u_{2,j+1}) }{\nabla (u_{2,j+1} - u_{2,j}) }|
	= 2 |\inner[\Omega]{\nabla (u_2 - u_{2,j+1}) }{\nabla (u_{2,j+1} - u_{2,j+1}^j) }|
	\\
	&\le \varepsilon_1 \norm{\nabla (u_{2} - u_{2,j+1}) }{\Omega}^2 + \varepsilon_1^{-1} \norm{\nabla (u_{2,j+1} - u_{2,j+1}^j) }{\Omega}^2
	\\
	&\le \varepsilon_1 \norm{\nabla (u_{2} - u_{2,j+1}) }{\Omega}^2 + \varepsilon_1^{-1} C_1 \norm{J_{j+1}^\Gamma(K-1/2)(u_{1,j+1} - g) - J_{j}^\Gamma(K-1/2)(u_{1,j} - g)}{H^{1/2}(\Gamma)}^2
\end{align*}
for some uniform constant $C_1>0$.

{\bf Step~1.4:}
With the triangle and the Young inequalities, stability of the involved operators, and the fact that $u_{1,j}$ and $u_{1,j+1}$ have integral mean zero, we further see for some uniform $C_2>0$ that
\begin{align*}
	&\norm{J_{j+1}^\Gamma(K-1/2)(u_{1,j+1} - g) - J_{j}^\Gamma(K-1/2)(u_{1,j} - g)}{H^{1/2}(\Gamma)}^2
	\\
	&\quad\le 
	2\norm{J_{j+1}^\Gamma(K-1/2)(u_{1,j+1} - g) - J_{j+1}^\Gamma(K-1/2)(u_{1,j} - g)}{H^{1/2}(\Gamma)}^2
	\\
	&\qquad+ 2 \norm{J_{j+1}^\Gamma(K-1/2)(u_{1,j} - g) - J_{j}^\Gamma(K-1/2)(u_{1,j} - g)}{H^{1/2}(\Gamma)}^2
	\\
	&\quad=
	2 \norm{J_{j+1}^\Gamma(K-1/2)(u_{1,j} - u_{1,j+1})}{H^{1/2}(\Gamma)}^2 + 2 \norm{(J_{j+1}^\Gamma -J_j^\Gamma) (K-1/2)(u_{1,j} - g)}{H^{1/2}(\Gamma)}^2
	\\
	&\quad\le 
	2 C_2 \norm{\nabla(u_{1,j+1} - u_{1,j}) }{\Omega}^2 + 2 \norm{(J_{j+1}^\Gamma -J_j^\Gamma) (K-1/2)(u_{1,j} - g)}{H^{1/2}(\Gamma)}^2.
\end{align*}

{\bf Step~1.5:}
For an equivalent $\TT_\coarse|_\Gamma$-piecewise constant mesh-size function $\tilde{h}_\coarse$ on $\Gamma$ satisfying $C_3^{-1} h_T \le \tilde{h}_\coarse|_{\partial T\cap \Gamma} \le C_3 h_T$  for some uniform $C_3>0$ and all $T\in\TT_\coarse\in\T$ with $\partial T\cap \Gamma \neq \emptyset$, the proof of \cite[Proposition~11.1]{cfpp14} reveals the existence of a uniform constant $C_4>0$ such that 
\begin{align*}
	\norm{(J_{j+1}^\Gamma -J_j^\Gamma) (K-1/2)(u_{1,j} - g)}{H^{1/2}(\Gamma)}^2
	&\le 
	C_4 \big[\norm{\tilde{h}_j^{1/2} (1-\Pi^\Gamma_j) \nabla_\Gamma (K-1/2)(u_{1,j} - g)}{\Gamma}^2 
	\\
	&\quad - \norm{\tilde{h}_{j+1}^{1/2} (1-\Pi^\Gamma_{j+1}) \nabla_\Gamma (K-1/2)(u_{1,j} - g)}{\Gamma}^2\big].
\end{align*}
For the second term, we employ the Young inequality $-a^2 \le -1/(1+\delta) (a+b)^2 + (1+\delta^{-1})/(1+\delta) b^2$ for $a,b\ge0, \delta>0$
in combination with the triangle inequality, local $L^2$-stability of $\Pi^\Gamma_{j+1}$, and the fact that $(1-\Pi^\Gamma_{j+1}) \nabla_\Gamma (u_{1,j+1}-u_{1,j})=0$   to see that 
\begin{align*}
	&- \norm{\tilde{h}_{j+1}^{1/2} (1-\Pi^\Gamma_{j+1}) \nabla_\Gamma (K-1/2)(u_{1,j} - g)}{\Gamma}^2 
	\\
	&\quad\le 
	- \frac{1}{1+\delta} \norm{\tilde{h}_{j+1}^{1/2} (1-\Pi^\Gamma_{j+1}) \nabla_\Gamma (K-1/2)(u_{1,j+1} - g)}{\Gamma}^2 
	\\
	&\quad\quad+ \frac{1+\delta^{-1}}{1+\delta} \norm{\tilde{h}_{j+1}^{1/2} (1-\Pi^\Gamma_{j+1}) \nabla_\Gamma (K-1/2)(u_{1,j+1} - u_{1,j} )}{\Gamma}^2 
	\\
	&\quad\le 
	- \frac{1}{1+\delta} \norm{\tilde{h}_{j+1}^{1/2} (1-\Pi^\Gamma_{j+1}) \nabla_\Gamma (K-1/2)(u_{1,j+1} - g)}{\Gamma}^2 
	\\
	&\quad\quad+ \frac{1+\delta^{-1}}{1+\delta} \norm{\tilde{h}_{j+1}^{1/2} \nabla_\Gamma K(u_{1,j+1} - u_{1,j} )}{\Gamma}^2.
\end{align*}
Finally, an application of the inverse estimate from \cite[Corollary~3.2]{affkmp17} in combination with the stability of the trace operator and the fact that $u_{1,j+1} - u_{1,j}$ has integral mean zero yields the existence of uniform constants $C,C_5>0$ with  
\begin{align*}
	\norm{\tilde{h}_{j+1}^{1/2} \nabla_\Gamma K(u_{1,j+1} - u_{1,j} )}{\Gamma}^2 
	\le C \norm{u_{1,j+1} - u_{1,j}}{H^{1/2}(\Gamma)}^2 
	\le C_5 \norm{\nabla(u_{1,j+1} - u_{1,j} )}{\Omega}^2.
\end{align*}

{\bf Step~1.6:}
Combining Step~1.1--1.5, we see that 
\begin{align*}
	&\norm{\nabla (u_{2,j+1} - u_{2,j}) }{\Omega}^2
	\le 
	\norm{\nabla (u_{2} - u_{2,j}) }{\Omega}^2 - (1-\varepsilon_1) \norm{\nabla (u_2 - u_{2,j+1}) }{\Omega}^2
	\\
	& + 2\varepsilon_1^{-1} C_1 \Big\{ \Big(C_2 + C_4 C_5\frac{1+ \delta^{-1}}{1+\delta}\Big) \big[ \norm{\nabla(u_1 - u_{1,j}) }{\Omega}^2 - \norm{\nabla(u_1 - u_{1,j+1}) }{\Omega}^2 \big]
	\\
	&+ C_4 \big[ \norm{\tilde{h}_j^{1/2} (1-\Pi^\Gamma_j) \nabla_\Gamma (K-1/2)(u_{1,j} - g)}{\Gamma}^2 \\
	& - \frac{1}{1+\delta} \norm{\tilde{h}_{j+1}^{1/2} (1-\Pi^\Gamma_{j+1}) \nabla_\Gamma (K-1/2)(u_{1,j+1} - g)}{\Gamma}^2 \big]\Big\}.
\end{align*}
Recall that $\norm{\tilde{h}_j^{1/2} (1-\Pi^\Gamma_j) \nabla_\Gamma (K-1/2)(u_{1,j} - g)}{\Gamma}^2 \le C_3 \eta_{2,j}^2$ by definition of the estimator~\eqref{eq:estimator2}. 
With the auxiliary terms 
\begin{align*}
	\alpha_j^2 := 2\varepsilon_1^{-1} C_1 &\Big\{\Big(C_2 + C_4 C_5\frac{1+ \delta^{-1}}{1+\delta}\Big) \norm{\nabla(u_1 - u_{1,j}) }{\Omega}^2 
	\\
	&+ \frac{C_4}{1+\delta} \norm{\tilde{h}_j^{1/2} (1-\Pi^\Gamma_j) \nabla_\Gamma (K-1/2)(u_{1,j} - g)}{\Gamma}^2\Big\}
\end{align*}
for all $j\in\N_0$, we hence arrive at
\begin{align*}
	\norm{\nabla (u_{2,j+1} - u_{2,j}) }{\Omega}^2
	&\le 
	\norm{\nabla (u_{2} - u_{2,j}) }{\Omega}^2 - (1-\varepsilon_1) \norm{\nabla (u_2 - u_{2,j+1}) }{\Omega}^2
	\\
	&\quad 
	+ 2\varepsilon_1^{-1} C_1 C_4 \Big(1-\frac{1}{1+\delta}\Big) C_3 \eta_{2,j}^2
	+ \alpha_j^2 - \alpha_{j+1}^2.
\end{align*}
Choosing first $\varepsilon_1>0$ such that $\varepsilon_1 \const{rel}^2 < \varepsilon_{\rm qo}$ and then $\delta>0$ such that $\varepsilon_1 \const{rel}^2 + \varepsilon_2 \le \varepsilon_{\rm qo}$ with $\varepsilon_2 := 2\varepsilon_1^{-1} C_1 C_3 C_4 \delta/(1+\delta)$, we conclude the proof of \eqref{eq:quasi_orthogonality_main}. 
The bound~\eqref{eq:bound_osc} follows directly from reliability~\eqref{eq:reliability_efficiency} of the estimator.

{\bf Step~2:} In this step, we show that the quasi-orthogonality~\eqref{eq:quasi_orthogonality} indeed implies general quasi-orthogonality~\eqref{eq:general_quasi_orthogonality}. 
Let $\varepsilon_{\rm qo} > 0$ be arbitrary but fixed. 
We choose $\varepsilon_1,\varepsilon_2>0$ with $\varepsilon_1 \const{rel}^2 + \varepsilon_2 \le \varepsilon_{\rm qo}$ as in Step~1.
Then, it holds that
\begin{align*}
	\varepsilon_1 \, \norm{\nabla (\u - \u_j)}{\Omega}^2 + (\varepsilon_2 - \varepsilon_{\rm qo}) \eta_j^2 \le 0
	\quad \text{for all } j \in \N_0.
\end{align*}
Together with~\eqref{eq:quasi_orthogonality}, this shows for all $\ell,N \in \N_0$ that
\begin{align*}
	&\sum_{j = \ell}^{\ell+N}  \norm{\nabla (\u_{j+1} - \u_j) }{\Omega}^2 - \varepsilon_{\rm qo} \eta_j^2 
	\le 
	\sum_{j=\ell}^{\ell+N} \norm{\nabla (\u - \u_j) }{\Omega}^2  - (1-\varepsilon_1) \norm{\nabla (\u - \u_{j+1}) }{\Omega}^2
	\\
	&\hspace{6cm}+ (\varepsilon_2 - \varepsilon_{\rm qo}) \eta_j^2
	+ \alpha_j^2 - \alpha_{j+1}^2 
	\\
	&\quad\le 
	\sum_{j=\ell}^{\ell+N} (1-\varepsilon_1) \norm{\nabla (\u - \u_j) }{\Omega}^2  - (1-\varepsilon_1) \norm{\nabla (\u - \u_{j+1}) }{\Omega}^2 + \alpha_j^2 - \alpha_{j+1}^2 
	\\
	&\quad= (1-\varepsilon_1) \norm{\nabla (\u - \u_\ell) }{\Omega}^2  - (1-\varepsilon_1) \norm{\nabla (\u - \u_{\ell+N+1}) }{\Omega}^2  + \alpha_\ell^2 - \alpha_{\ell+N+1}^2 
	\\
	&\quad\le (1-\varepsilon_1) \norm{\nabla (\u - \u_\ell) }{\Omega}^2  + \alpha_\ell^2.
\end{align*}
With reliability and the bound~\eqref{eq:bound_osc}, we conclude the proof.\hfill$\square$

\section{Numerical experiments}\label{sec:numerics}

In this section, we present some numerical experiments
in two dimensions ($d=2$) for first-order finite elements ($p=1$).
The aim of our computations is twofold:
On the one hand, we aim to illustrate the reliability and efficiency
of the hybrid FEM-BEM method~\eqref{eq:u1_coarse}--\eqref{eq:u2_coarse} to solve
the full-space linear elliptic transmission problem~\eqref{eq:transmission}.
On the other hand, for the case of singular solutions,
we aim to show the superiority of adaptive mesh refinement
(steered by Algorithm~\ref{alg:adaptive_algorithm})
over uniform mesh refinement.

The numerical results presented in this section were obtained with
an implementation based on the MATLAB libraries
\texttt{p1afem}~\cite{fpw11}
(assembly of the finite element matrices, computation of the error estimates, mesh management and refinement)
and
\texttt{HILBERT}~\cite{aefffgkmp14}
(evaluation of the double-layer operator necessary to realize the Dirichlet condition in~\eqref{eq:u2_coarse}).
The discrete equations are assembled in the MATLAB sparse format and solved with the MATLAB backslash
operator.

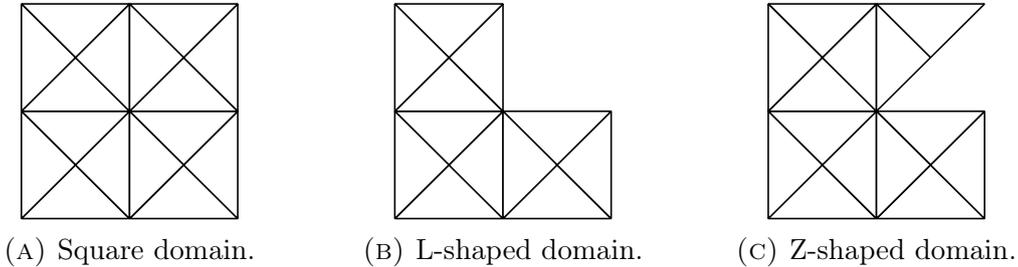
\begin{figure}[ht]
\centering
\begin{subfigure}[b]{0.3\textwidth}
\centering
\begin{tikzpicture}[scale = 0.6]
\begin{axis}[
axis lines = none,
axis equal,
]
\addplot[patch,
color=white, 
faceted color = black,
line width = 1pt,
patch table = {./pics/square/elements.dat}] file{./pics/square/coordinates.dat};
\end{axis}
\end{tikzpicture}
\caption{Square domain.}
\label{fig:initial_mesh_square}
\end{subfigure}
\begin{subfigure}[b]{0.3\textwidth}
\centering
\begin{tikzpicture}[scale = 0.6]
\begin{axis}[
axis lines = none,
axis equal,
]
\addplot[patch,
color=white, 
faceted color = black,
line width = 1pt,
patch table = {./pics/Lshaped/elements.dat}] file{./pics/Lshaped/coordinates.dat};
\end{axis}
\end{tikzpicture}
\caption{L-shaped domain.}
\label{fig:initial_mesh_lshaped}
\end{subfigure}
\begin{subfigure}[b]{0.3\textwidth}
\centering
\begin{tikzpicture}[scale = 0.6]
\begin{axis}[
axis lines = none,
axis equal,
]
\addplot[patch,
color=white, 
faceted color = black,
line width = 1pt,
patch table = {./pics/Zshaped/elements.dat}] file{./pics/Zshaped/coordinates.dat};
\end{axis}
\end{tikzpicture}
\caption{Z-shaped domain.}
\label{fig:initial_mesh_zshaped}
\end{subfigure}
\caption{Domains and initial meshes $\TT_0$.}
\label{fig:initial_meshes}
\end{figure}

In the following three sections,
we present numerical results
for $\Omega \subset \R^2$ being
either the square domain $(-1/4,1/4)^2$,
or the L-shaped domain $(-1/4,1/4)^2\setminus[0,1/4)^2$,
or the Z-shaped domain $(-1/4,1/4)^2\setminus\mathrm{conv}\{ (0,0) , (1/4,0) , (1/4,1/4) \}$.
The domains and the respective meshes $\TT_0$ used to initialize Algorithm~\ref{alg:adaptive_algorithm}
are depicted in Figure~\ref{fig:initial_meshes}.

\subsection{Square domain} \label{sec:numerics_square}

We consider problem~\eqref{eq:transmission} posed on the square domain $\Omega = (-1/4,1/4)^2$.
The data $f$, $g$, and $\phi$ are chosen in such a way that the exact solution is given by
\begin{equation*}
u(x_1,x_2) = \cos(2\pi x_1) \cos(2\pi x_2),
\quad
\uext(x_1,x_2) = \frac{x_1 + x_2}{x_1^2 + x_2^2},
\end{equation*}
and they satisfy the compatibility condition~\eqref{eq:compatibility}.
Note that the solution $u$ is smooth in $\Omega$.

Starting from an initial mesh $\TT_0$ made of 16 elements (see Figure~\ref{fig:initial_mesh_square}),
we perform 9 steps of uniform refinement,
where in each step each triangle is split into four elements by three bisections (a so-called bisec(3)-operation).
For each mesh $\TT_\ell$, we consider the corresponding approximation $u_\ell$ generated by
the hybrid FEM-BEM method~\eqref{eq:u1_coarse}--\eqref{eq:u2_coarse}
($\ell = 0, \dots, 9$).
Note that the same sequence of approximations
can be obtained by running 10 steps of Algorithm~\ref{alg:adaptive_algorithm} with $\theta = 1$. 

\begin{figure}[ht]
\centering
\begin{subfigure}[b]{0.31\textwidth}
\centering
\includegraphics[trim = 40 30 45 30, clip, width=\textwidth]{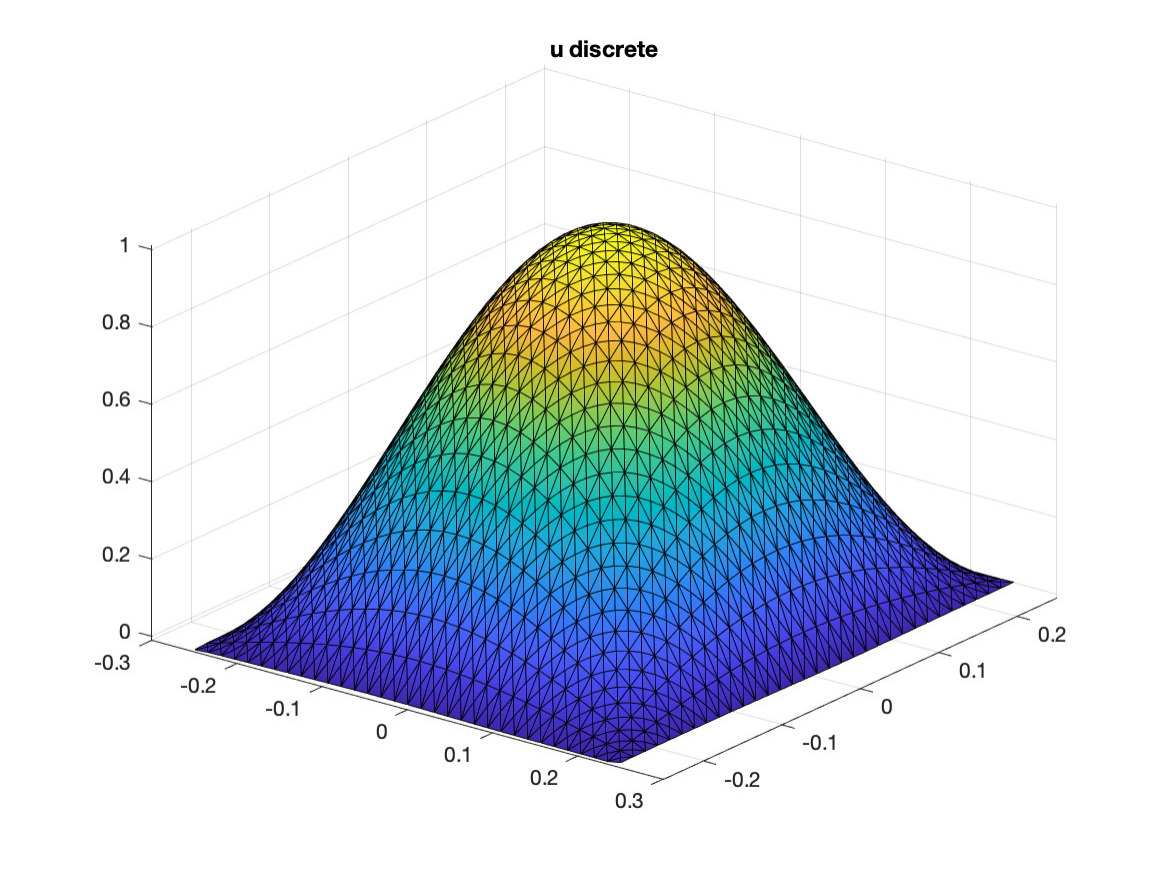}
\caption{$u_4 = u_{1,4} + u_{2,4}$.}
\end{subfigure}
\hfill
\begin{subfigure}[b]{0.31\textwidth}
\centering
\includegraphics[trim = 40 30 45 30, clip, width=\textwidth]{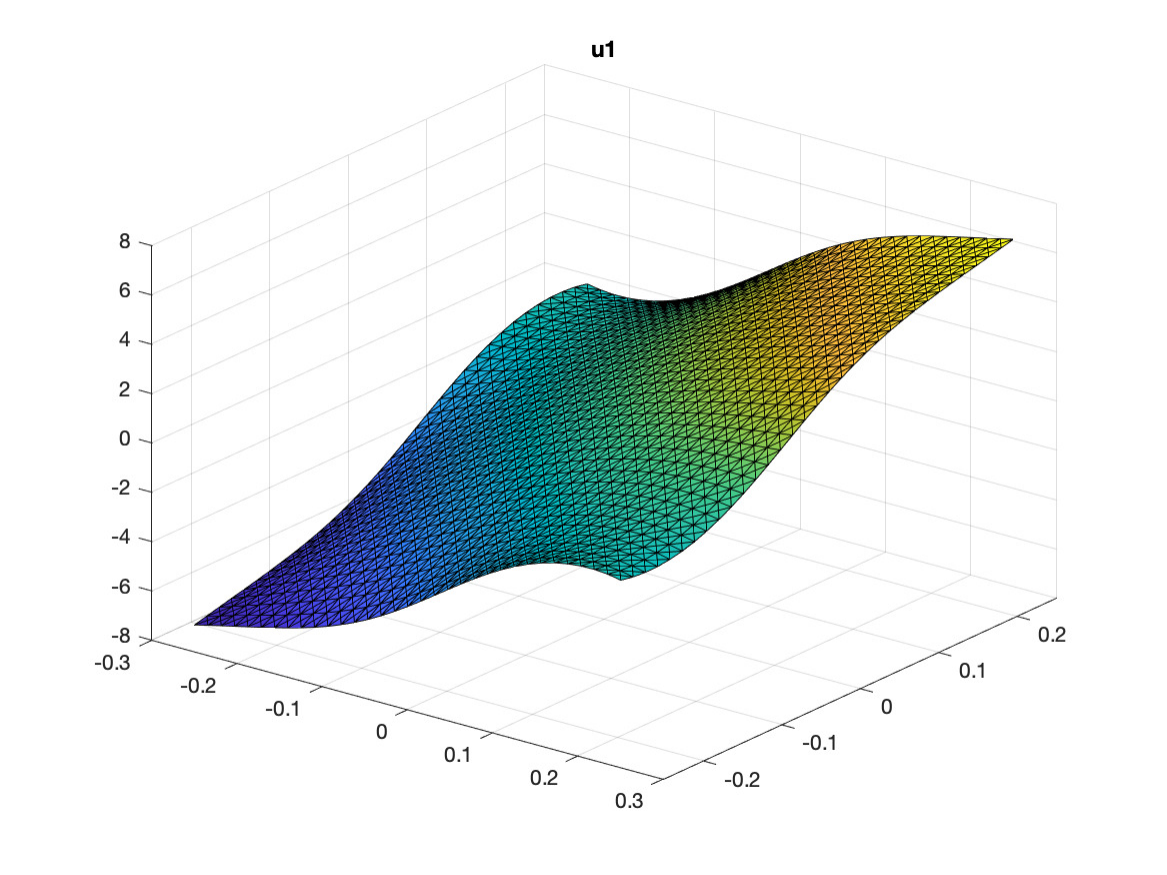}
\caption{$u_{1,4}$.}
\end{subfigure}
\hfill
\begin{subfigure}[b]{0.31\textwidth}
\centering
\includegraphics[trim = 40 30 45 30, clip, width=\textwidth]{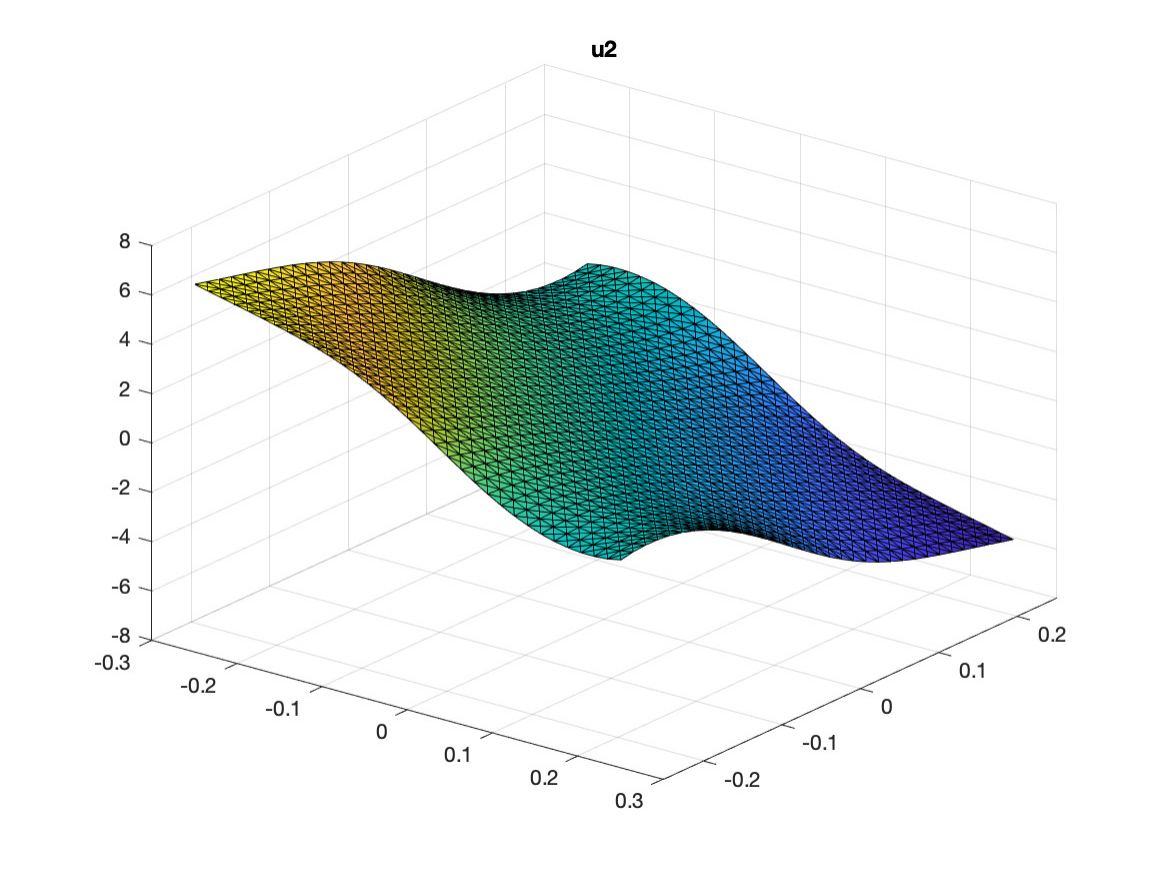}
\caption{$u_{2,4}$.}
\end{subfigure}
\caption{Experiments in Section~\ref{sec:numerics_square}:
Plots of the approximation $u_4$ and of its components $u_{1,4}$ and $u_{2,4}$.}
\label{fig:square_approx}
\end{figure}

In Figure~\ref{fig:square_approx}, to illustrate the working principle of our hybrid FEM-BEM method, 
we show the plot of the fifth approximation $u_4 = u_{1,4} + u_{2,4}$
together with those of its components $u_{1,4}$ (the solution of~\eqref{eq:u1_coarse})
and $u_{2,4}$ (the solution of~\eqref{eq:u2_coarse}).

\begin{figure}[ht]
\centering
\begin{subfigure}[b]{0.48\textwidth}
\centering
\begin{tikzpicture}
\pgfplotstableread{pics/square/results_square_uniform.dat}{\squareuniform}
\begin{loglogaxis}[
width = \textwidth,
xmin = 10,
xmax = 3e6,
ymin=1e-3,
ymax=25,
xlabel={$N_\ell$},
ymajorgrids=true, xmajorgrids=true, grid style=dashed, 
tick label style={font=\tiny},
legend style={
legend pos=north east,
fill=none, draw=none,
font=\tiny
}
]
\addplot[red, mark=diamond, thick] table[x=dofs,y=error]{\squareuniform};
\addplot[blue, mark=+, thick] table[x=dofs,y=est]{\squareuniform};
\addplot[orange, mark=o, thick, densely dash dot] table[x=dofs,y=est1]{\squareuniform};
\addplot[cyan, mark=x, thick, densely dash dot] table[x=dofs,y=est2]{\squareuniform};
\addplot[black,dashed,domain=10^(0):10^(7)] {1.5*x^(-1/2) };
\node at (axis cs:1e1,1.5e-1) [anchor=north west] {\tiny $\mathcal{O}(N_\ell^{-1/2})$};
\legend{
$\norm{u - u_\ell}{H^1(\Omega)}$,
$\eta_\ell$,
$\eta_{1,\ell}$,
$\eta_{2,\ell}$,
}
\end{loglogaxis}
\end{tikzpicture}
\caption{Algorithm~\ref{alg:adaptive_algorithm} with $\theta=1$.}
\label{fig:square_convergence_uniform}
\end{subfigure}
\hfill
\begin{subfigure}[b]{0.48\textwidth}
\centering
\begin{tikzpicture}
\pgfplotstableread{pics/square/results_square_adaptive.dat}{\squareadaptive}
\begin{loglogaxis}[
width = \textwidth,
xmin = 10,
xmax = 3e6,
ymin=1e-3,
ymax=25,
xlabel={$N_\ell$},
ymajorgrids=true, xmajorgrids=true, grid style=dashed, 
tick label style={font=\tiny},
legend style={
legend pos=north east,
fill=none, draw=none,
font=\tiny
}
]
\addplot[red, mark=diamond, thick] table[x=dofs,y=error]{\squareadaptive};
\addplot[blue, mark=+, thick] table[x=dofs,y=est]{\squareadaptive};
\addplot[orange, mark=o, thick, densely dash dot] table[x=dofs,y=est1]{\squareadaptive};
\addplot[cyan, mark=x, thick, densely dash dot] table[x=dofs,y=est2]{\squareadaptive};
\addplot[black,dashed,domain=10^(0):10^(7)] {1.5*x^(-1/2) };
\node at (axis cs:1e1,1.5e-1) [anchor=north west] {\tiny $\mathcal{O}(N_\ell^{-1/2})$};
\legend{
$\norm{u - u_\ell}{H^1(\Omega)}$,
$\eta_\ell$,
$\eta_{1,\ell}$,
$\eta_{2,\ell}$,
}
\end{loglogaxis}
\end{tikzpicture}
\caption{Algorithm~\ref{alg:adaptive_algorithm} with $\theta=1/4$.}
\label{fig:square_convergence_adaptive}
\end{subfigure}
\caption{
Experiments in Section~\ref{sec:numerics_square}:
Error $\norm{u - u_\ell}{H^1(\Omega)}$
and error estimates $\eta_\ell$, $\eta_{1,\ell}$, and $\eta_{2,\ell}$
plotted against the total number of degrees of freedom $N_\ell$.
Comparison of uniform and adaptive mesh refinement.
}
\label{fig:square_convergence}
\end{figure}

In Figure~\ref{fig:square_convergence_uniform},
we plot the approximation error $\norm{u - u_\ell}{H^1(\Omega)}$
and the error estimates $\eta_\ell$, $\eta_{1,\ell}$, and $\eta_{2,\ell}$
against the total number of degrees of freedom $N_\ell$
(i.e., the number of vertices of the mesh).
We see that the error decays with rate $\mathcal{O}(N_\ell^{-1/2})$,
which is the optimal rate for first-order finite elements.
This is in agreement with the \textsl{a~priori} estimate from Proposition~\ref{prop:apriori}.
Here, the best approximation error decays with optimal rate on uniform meshes,
because the solution is regular.
We also see that the total error estimator $\eta_\ell$  and both its components $\eta_{1,\ell}$ and $\eta_{2,\ell}$
decay with the same rate,
which confirms the \textsl{a~posteriori} estimate established in Proposition~\ref{prop:reliability}.

Next, we run Algorithm~\ref{alg:adaptive_algorithm} with $\theta = 1/4$.
In Figure~\ref{fig:square_convergence_adaptive},
we plot the approximation error $\norm{u - u_\ell}{H^1(\Omega)}$
and the error estimates $\eta_\ell$, $\eta_{1,\ell}$, and $\eta_{2,\ell}$
against the total number of degrees of freedom $N_\ell$.
The performance of the adaptive approach is comparable to
the one of uniform mesh refinement.
This is not surprising, given the smoothness of the exact solution.

\subsection{L-shaped domain} \label{sec:numerics_Lshaped}

We consider problem~\eqref{eq:transmission} posed on the L-shaped domain $(-1/4,1/4)^2\setminus[0,1/4)^2$.
The data $f$, $g$ and $\phi$ are chosen in such a way that the exact solution is given by
\begin{align*}
u(x_1,x_2) &= r^{2/3} \sin(2\varphi/3),\\
\uext(x_1,x_2) &= \frac{1}{2} \log \left[ (x_1+1/8)^2 + (x_2-1/8)^2 \right]
- \frac{1}{2} \log \left[ (x_1-1/8)^2 + (x_2+1/8)^2 \right],
\end{align*}
where the expression of the solution in the interior domain is written
using polar coordinates
(i.e., $r = \sqrt{x_1^2+x_2^2}$ and $\varphi\in(\pi/2,2\pi)$).
In particular, we note that $f=0$,
that the compatibility condition~\eqref{eq:compatibility} is satisfied,
and that the solution exhibits a singularity at the reentrant corner $(0,0)$.

Starting from an initial mesh $\TT_0$ made of 12 elements (see Figure~\ref{fig:initial_mesh_lshaped}),
we run Algorithm~\ref{alg:adaptive_algorithm} with $\theta = 1$ (uniform mesh refinement) and $\theta = 1/4$
(adaptive mesh refinement).

\begin{figure}[ht]
\centering
\begin{subfigure}[b]{0.31\textwidth}
\centering
\includegraphics[trim = 40 30 45 30, clip, width=\textwidth]{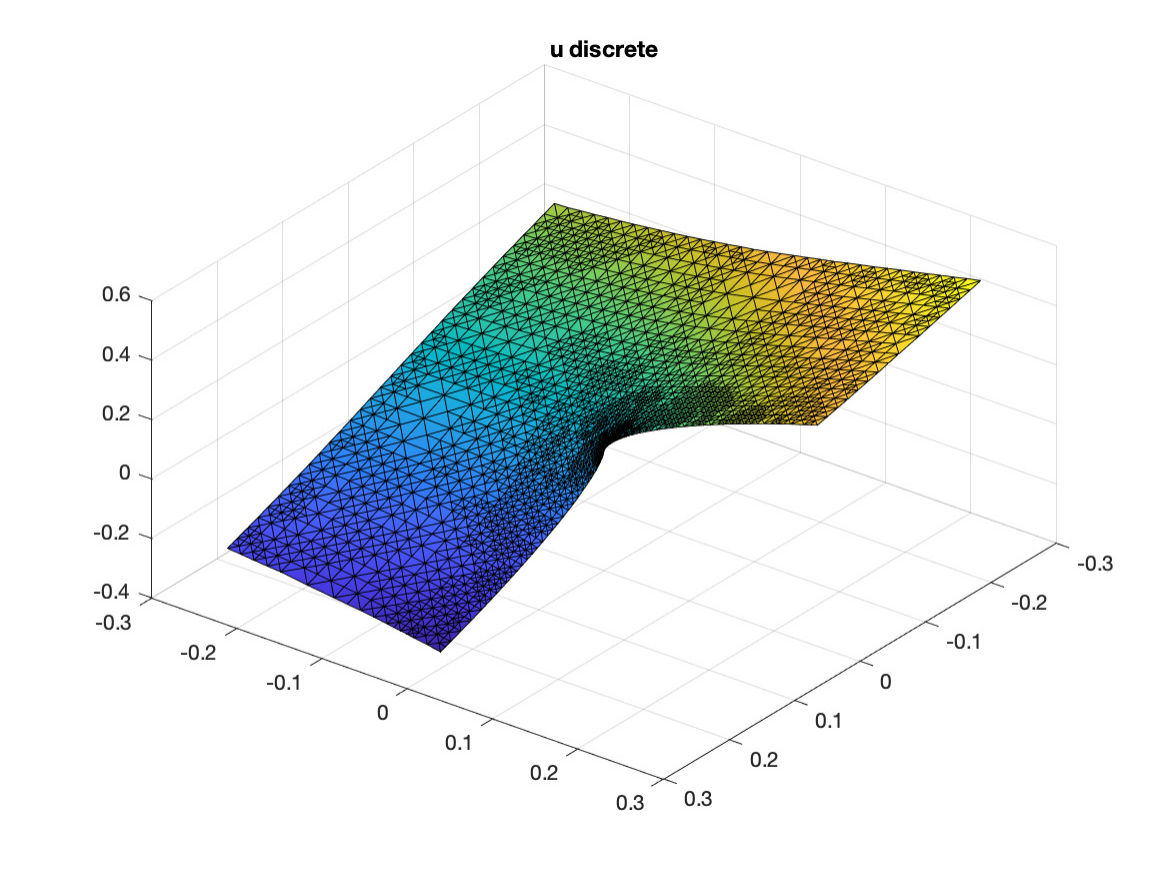}
\caption{$u_{14} = u_{1,14} + u_{2,14}$.}
\end{subfigure}
\hfill
\begin{subfigure}[b]{0.31\textwidth}
\centering
\includegraphics[trim = 40 30 45 30, clip, width=\textwidth]{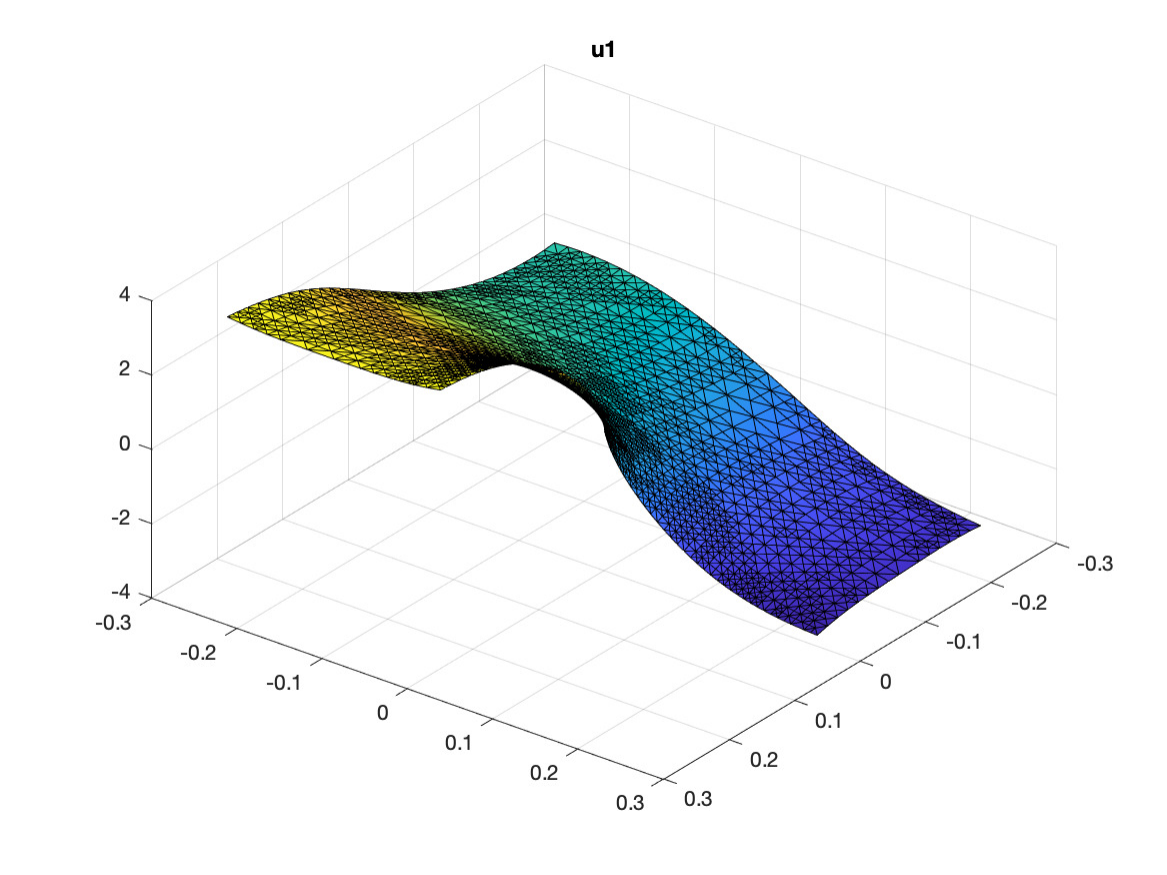}
\caption{$u_{1,14}$.}
\end{subfigure}
\hfill
\begin{subfigure}[b]{0.31\textwidth}
\centering
\includegraphics[trim = 40 30 45 30, clip, width=\textwidth]{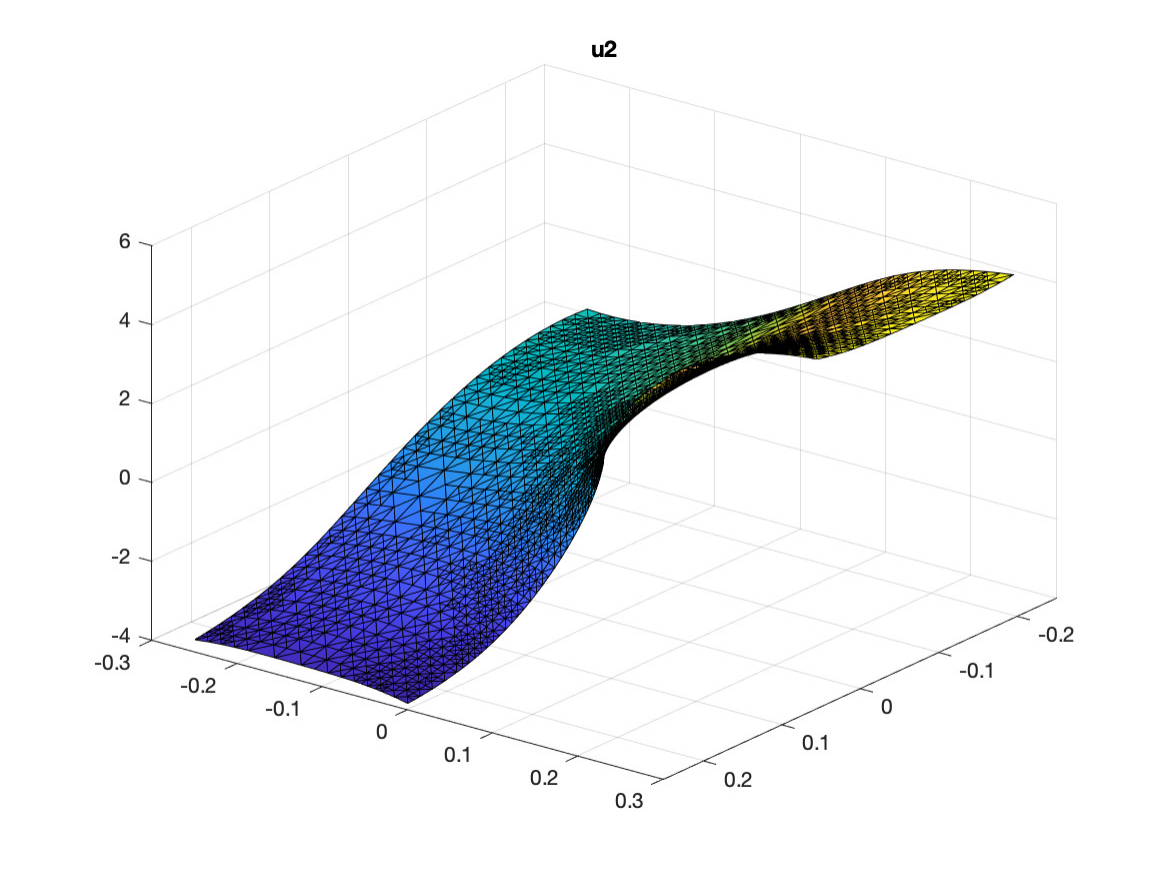}
\caption{$u_{2,14}$.}
\end{subfigure}
\caption{Experiments in Section~\ref{sec:numerics_Lshaped}:
Plots of the approximation $u_{14}$ and of its components $u_{1,{14}}$ and $u_{2,{14}}$.}
\label{fig:Lshaped_approx}
\end{figure}

In Figure~\ref{fig:Lshaped_approx},
we show the plot of the adaptive approximation $u_{14}$
together with those of its components $u_{1,14}$ and $u_{2,14}$
in the decomposition $u_{14} = u_{1,14} + u_{2,14}$.
We see that all approximations exhibit a singularity at the reentrant corner.
From the plot of $u_{14}$, we can clearly see the underlying adaptively refined mesh, which is, as expected, strongly refined where the singularity occurs.

\begin{figure}[ht]
\centering
\begin{subfigure}[b]{0.48\textwidth}
\centering
\begin{tikzpicture}
\pgfplotstableread{pics/Lshaped/results_Lshaped_uniform.dat}{\squareuniform}
\begin{loglogaxis}[
width = \textwidth,
xmin = 7,
xmax = 2.4e6,
ymin=2e-4,
ymax=15,
xlabel={$N_\ell$},
ymajorgrids=true, xmajorgrids=true, grid style=dashed, 
tick label style={font=\tiny},
legend style={
legend pos=south west,
fill=none, draw=none,
font=\tiny
}
]
\addplot[red, mark=diamond, thick] table[x=dofs,y=error]{\squareuniform};
\addplot[blue, mark=+, thick] table[x=dofs,y=est]{\squareuniform};
\addplot[orange, mark=o, thick, densely dash dot] table[x=dofs,y=est1]{\squareuniform};
\addplot[cyan, mark=x, thick, densely dash dot] table[x=dofs,y=est2]{\squareuniform};
\addplot[black,dashed,domain=10^(0):10^(7)] {0.5*x^(-1/3) };
\node at (axis cs:1e1,1.0e-1) [anchor=north west] {\tiny $\mathcal{O}(N_\ell^{-1/3})$};
\legend{
$\norm{u - u_\ell}{H^1(\Omega)}$,
$\eta_\ell$,
$\eta_{1,\ell}$,
$\eta_{2,\ell}$,
}
\end{loglogaxis}
\end{tikzpicture}
\caption{Algorithm~\ref{alg:adaptive_algorithm} with $\theta=1$.}
\label{fig:Lshaped_convergence_uniform}
\end{subfigure}
\hfill
\begin{subfigure}[b]{0.48\textwidth}
\centering
\begin{tikzpicture}
\pgfplotstableread{pics/Lshaped/results_Lshaped_adaptive.dat}{\squareadaptive}
\begin{loglogaxis}[
width = \textwidth,
xmin = 7,
xmax = 2.4e6,
ymin=2e-4,
ymax=15,
xlabel={$N_\ell$},
ymajorgrids=true, xmajorgrids=true, grid style=dashed, 
tick label style={font=\tiny},
legend style={
legend pos=south west,
fill=none, draw=none,
font=\tiny
}
]
\addplot[red, mark=diamond, thick] table[x=dofs,y=error]{\squareadaptive};
\addplot[blue, mark=+, thick] table[x=dofs,y=est]{\squareadaptive};
\addplot[orange, mark=o, thick, densely dash dot] table[x=dofs,y=est1]{\squareadaptive};
\addplot[cyan, mark=x, thick, densely dash dot] table[x=dofs,y=est2]{\squareadaptive};
\addplot[black,dashed,domain=10^(0):10^(7)] {10*x^(-1/2) };
\node at (axis cs:7e4,1e-2) [anchor=north west] {\tiny $\mathcal{O}(N_\ell^{-1/2})$};
\legend{
$\norm{u - u_\ell}{H^1(\Omega)}$,
$\eta_\ell$,
$\eta_{1,\ell}$,
$\eta_{2,\ell}$,
}
\end{loglogaxis}
\end{tikzpicture}
\caption{Algorithm~\ref{alg:adaptive_algorithm} with $\theta=1/4$.}
\label{fig:Lshaped_convergence_adaptive}
\end{subfigure}
\caption{
Experiments in Section~\ref{sec:numerics_Lshaped}:
Error $\norm{u - u_\ell}{H^1(\Omega)}$
and error estimates $\eta_\ell$, $\eta_{1,\ell}$, and $\eta_{2,\ell}$
plotted against the total number of degrees of freedom $N_\ell$.
Comparison of uniform and adaptive mesh refinement.
}
\label{fig:Lshaped_convergence}
\end{figure}

In Figure~\ref{fig:Lshaped_convergence},
for both uniform and adaptive mesh refinements,
we plot the error $\norm{u - u_\ell}{H^1(\Omega)}$
and the error estimates $\eta_\ell$, $\eta_{1,\ell}$, and $\eta_{2,\ell}$
against the total number of degrees of freedom $N_\ell$.
For both approaches,
the error is overestimated by the total error estimate
and
the total error estimate decays with the same rate as the error.
However,
the method with adaptive mesh refinement achieves the optimal rate $\mathcal{O}(N_\ell^{-1/2})$
observed in the previous section in the case of a smooth solution,
whereas for the method with uniform mesh refinement
we observe the suboptimal rate $\mathcal{O}(N_\ell^{-1/3})$.

\subsection{Z-shaped domain} \label{sec:numerics_Zshaped}

We conclude the section by considering problem~\eqref{eq:transmission}
posed on the Z-shaped domain $(-1/4,1/4)^2\setminus\mathrm{conv}\{ (0,0) , (1/4,0) , (1/4,1/4) \}$.
We choose the data
$f = 1$, $g = 0$ and $\phi = -7/[8(8+\sqrt{2})]$,
for which an explicit expression of the exact solution is not available.
However, note that they do satisfy
the compatibility condition~\eqref{eq:compatibility}.
Starting from an initial mesh $\TT_0$ made of 14 elements (see Figure~\ref{fig:initial_mesh_zshaped}),
again we run Algorithm~\ref{alg:adaptive_algorithm} with $\theta = 1$ and $\theta = 1/4$.

\begin{figure}[ht]
\centering
\begin{subfigure}[b]{0.31\textwidth}
\centering
\includegraphics[trim = 30 30 45 30, clip, width=\textwidth]{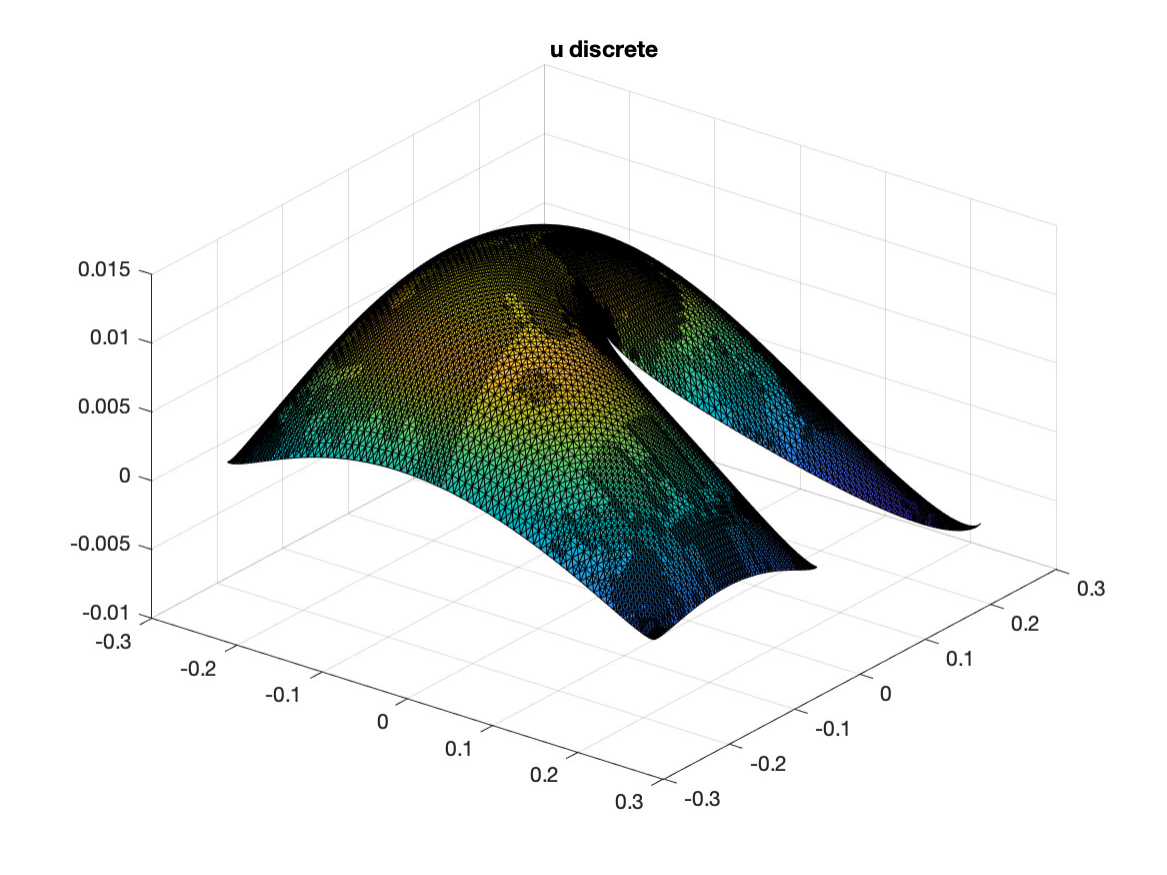}
\caption{$u_{10} = u_{1,10} + u_{2,10}$.}
\end{subfigure}
\hfill
\begin{subfigure}[b]{0.31\textwidth}
\centering
\includegraphics[trim = 30 30 45 30, clip, width=\textwidth]{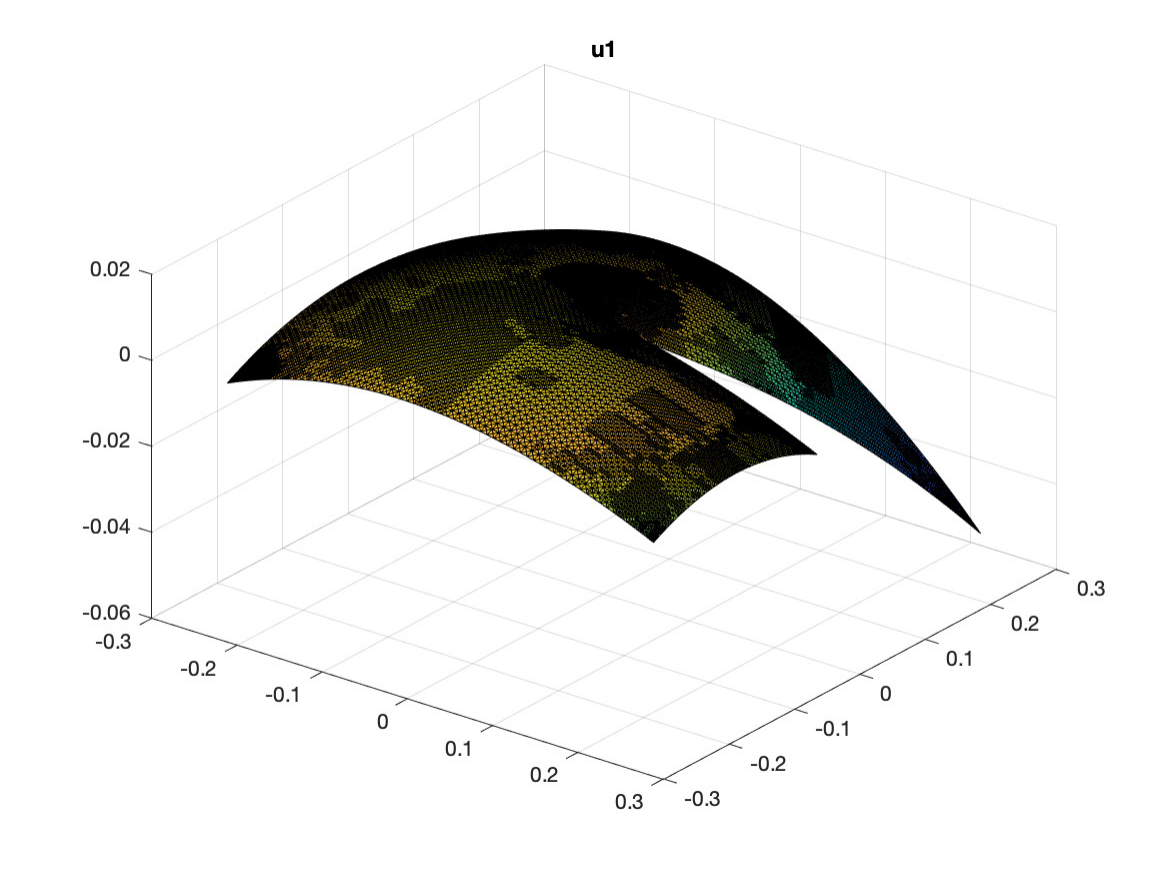}
\caption{$u_{1,10}$.}
\end{subfigure}
\hfill
\begin{subfigure}[b]{0.31\textwidth}
\centering
\includegraphics[trim = 30 30 45 30, clip, width=\textwidth]{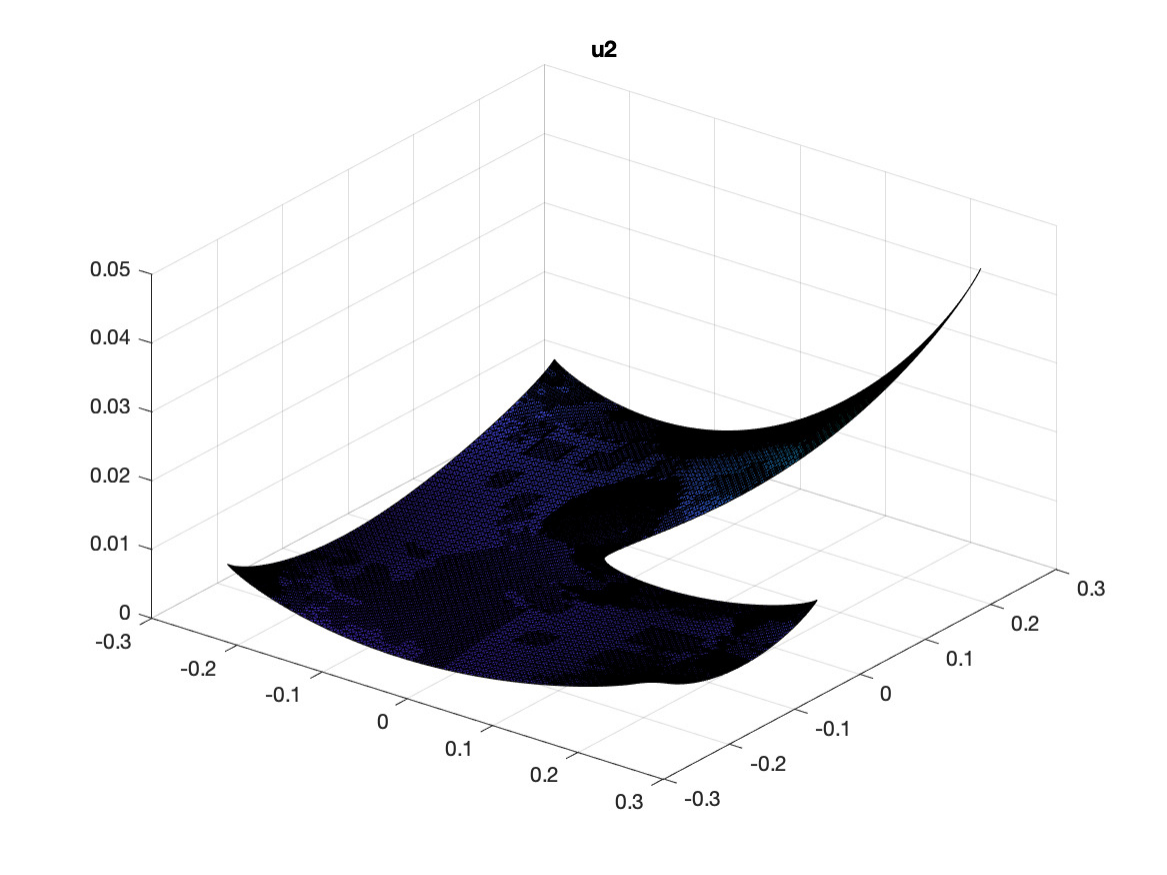}
\caption{$u_{2,10}$.}
\end{subfigure}
\caption{Experiments in Section~\ref{sec:numerics_Zshaped}:
Plots of the approximation $u_{10}$ and of its components $u_{1,{10}}$ and $u_{2,{10}}$.}
\label{fig:Zshaped_approx}
\end{figure}

The adaptive approximation $u_{10}$ and its components $u_{1,{10}}$ and $u_{2,{10}}$
are depicted in Figure~\ref{fig:Zshaped_approx}.
We observe that the adaptively refined mesh identifies the geometric singularity at
the reentrant corner.

\begin{figure}[ht]
\centering
\begin{subfigure}[b]{0.48\textwidth}
\centering
\begin{tikzpicture}
\pgfplotstableread{pics/Zshaped/results_Zshaped_uniform.dat}{\squareuniform}
\begin{loglogaxis}[
width = \textwidth,
xmin = 7,
xmax = 2.4e6,
ymin=1.2e-4,
ymax=1.2e-1,
xlabel={$N_\ell$},
ymajorgrids=true, xmajorgrids=true, grid style=dashed, 
tick label style={font=\tiny},
legend style={
legend pos=south west,
fill=none, draw=none,
font=\tiny
}
]
\addplot[blue, mark=+, thick] table[x=dofs,y=est]{\squareuniform};
\addplot[orange, mark=o, thick, densely dash dot] table[x=dofs,y=est1]{\squareuniform};
\addplot[cyan, mark=x, thick, densely dash dot] table[x=dofs,y=est2]{\squareuniform};
\addplot[black,dashed,domain=10^(0):10^(7)] {0.04*x^(-5/14) };
\node at (axis cs:1e2,1.0e-3) [anchor=south west] {\tiny $\mathcal{O}(N_\ell^{-5/14})$};
\legend{
$\eta_\ell$,
$\eta_{1,\ell}$,
$\eta_{2,\ell}$,
}
\end{loglogaxis}
\end{tikzpicture}
\caption{Algorithm~\ref{alg:adaptive_algorithm} with $\theta=1$.}
\label{fig:Zshaped_convergence_uniform}
\end{subfigure}
\begin{subfigure}[b]{0.48\textwidth}
\centering
\begin{tikzpicture}
\pgfplotstableread{pics/Zshaped/results_Zshaped_adaptive.dat}{\squareuniform}
\begin{loglogaxis}[
width = \textwidth,
xmin = 7,
xmax = 2.4e6,
ymin=1.2e-4,
ymax=1.2e-1,
xlabel={$N_\ell$},
ymajorgrids=true, xmajorgrids=true, grid style=dashed, 
tick label style={font=\tiny},
legend style={
legend pos=south west,
fill=none, draw=none,
font=\tiny
}
]
\addplot[blue, mark=+, thick] table[x=dofs,y=est]{\squareuniform};
\addplot[orange, mark=o, thick, densely dash dot] table[x=dofs,y=est1]{\squareuniform};
\addplot[cyan, mark=x, thick, densely dash dot] table[x=dofs,y=est2]{\squareuniform};
\addplot[black,dashed,domain=10^(0):10^(7)] {1*x^(-1/2) };
\node at (axis cs:1e5,8e-3) [anchor=north west] {\tiny $\mathcal{O}(N_\ell^{-1/2})$};
\legend{
$\eta_\ell$,
$\eta_{1,\ell}$,
$\eta_{2,\ell}$,
}
\end{loglogaxis}
\end{tikzpicture}
\caption{Algorithm~\ref{alg:adaptive_algorithm} with $\theta=1/4$.}
\label{fig:Zshaped_convergence_adaptive}
\end{subfigure}
\caption{
Experiments in Section~\ref{sec:numerics_Zshaped}:
Error estimates $\eta_\ell$, $\eta_{1,\ell}$, and $\eta_{2,\ell}$
plotted against the total number of degrees of freedom $N_\ell$.
Comparison of uniform and adaptive mesh refinement.
}
\label{fig:Zshaped_convergence}
\end{figure}
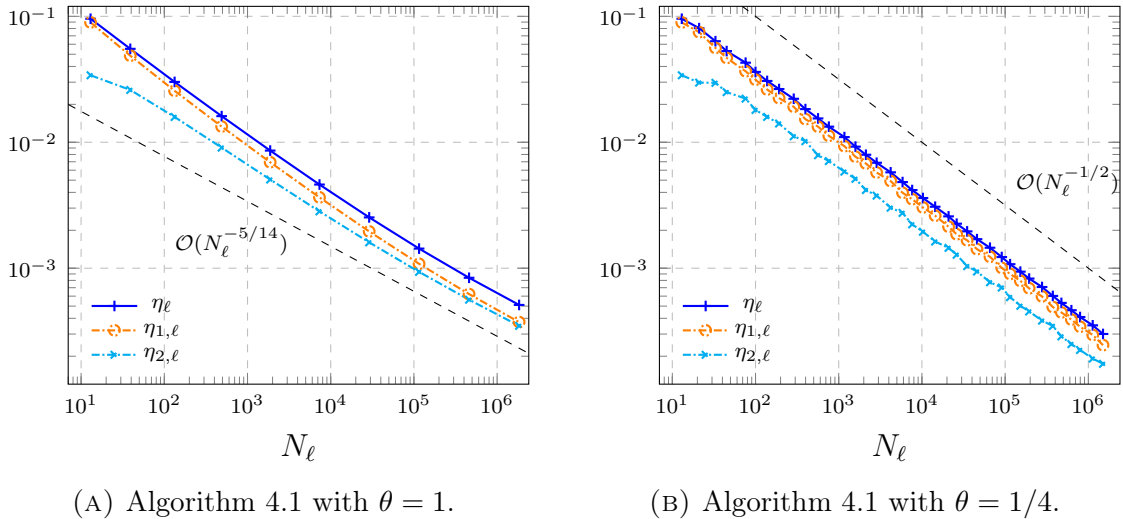

In Figure~\ref{fig:Zshaped_convergence},
we show the decay of the error estimates $\eta_\ell$, $\eta_{1,\ell}$, and $\eta_{2,\ell}$
as the total number of degrees of freedom $N_\ell$ increases.
The convergence rate is optimal for adaptive mesh refinement
and suboptimal for uniform mesh refinement.

\bibliographystyle{alpha}
\bibliography{literature}
\end{document}